\newcommand{\R}{\mathbb{R}}
\newcommand{\N}{\mathbb{N}}
\newcommand{\Q}{\mathbb{Q}}
\newcommand{\Z}{\mathbb{Z}}
\newcommand{\bA}{\mathbf{A}}
\newcommand{\bP}{\mathbf{P}}
\newcommand{\bQ}{\mathbf{Q}}
\newcommand{\bT}{\mathbf{T}}
\newcommand{\bX}{\mathbf{X}}
\newcommand{\A}{\mathcal{A}}
\newcommand{\C}{\mathcal{C}}
\newcommand{\Cech}{{\v Cech}{} }
\newcommand{\dlim}{\varinjlim} 
\newcommand{\larr}{\left( \begin{array}{c}}
\newcommand{\rarr}{\end{array} \right) }
\def\AA{p'(\lambda)}
\def\ie{{\em i.e.,}\ }
\def\eg{{\em e.g.}\ }
\newcommand{\lsqarr}{\left[ \begin{array}{c}}
\newcommand{\rsqarr}{\end{array} \right]}
\newcommand{\arrow}{\rightarrow}
\def\coker{\mathop{\rm coker}\nolimits}
\newtheorem{theorem}{Theorem}
\newtheorem{definition}{Definition}
\newtheorem{lemma}{Lemma}
\newtheorem{corollary}{Corollary}
\newtheorem{proposition}{Proposition}
\begin{document}

\title{Homological Pisot Substitutions and Exact Regularity}
\author{Marcy~Barge, Henk~Bruin, Leslie~Jones, and Lorenzo~Sadun}

\subjclass[2000]{Primary: 37B50, 54H20,
Secondary: 11R06, 37B10, 55N05, 55N35, 52C23.}
\keywords{Pisot Conjecture, Substitution, Tiling Space, Kronecker
Flow, Inverse Limit Space.}

\begin{abstract} We consider one-dimensional substitution tiling spaces
where the dilatation (stretching factor) is a degree $d$ Pisot number,
and the first rational \Cech cohomology is $d$-dimensional.
We construct examples
of such ``homological Pisot'' substitutions whose tiling flows do not
have pure discrete 
spectra. These examples are not unimodular, and
we conjecture that the coincidence rank
must always divide a power of the norm of the dilatation.
To support this conjecture, we show that homological Pisot substitutions
exhibit an Exact Regularity Property (ERP), in which the number of occurrences
of a patch for a return length is governed strictly by the length.
The ERP puts strong constraints on the measure of any cylinder set in
the corresponding tiling space.
\end{abstract}

\maketitle

\section{Introduction}\label{sec:intro}

Versions of the Pisot Conjecture occur in number theory
(numeration systems, $\beta$-expansions), discrete geometry, dynamical
systems (construction of Markov partitions, arithmetical coding of
hyperbolic toral automorphisms), and physics (spectral properties of
materials with quasi-periodic atomic structure) - see the survey
\cite{BS}. In all of these settings there is an underlying
substitution and in this context a standard version of the conjecture
is
\\[3mm]
\textbf{Pisot Conjecture}: The $\R$-action (tiling flow) on the tiling
space associated with a one-dimensional substitution of unimodular and
irreducible Pisot type has pure discrete spectrum.
\smallskip

It is known that any such action has a non-trivial discrete part in
its spectrum, see \cite{BT}, and the conjecture is known to be true in the
case of a substitution on two letters, see \cite{BD, HS}. One of
the reasons for interest in this conjecture is that the $\R$-action on
a substitution tiling space has a pure discrete spectrum
(also known as pure point spectrum) if and only if a
one-dimensional quasicrystal, whose atoms are arranged according to
the pattern of any tiling in the tiling space, has pure discrete 
diffraction spectrum, see \cite{Dworkin,LMS}.

A \emph{substitution} $\phi$ is a function from a
finite alphabet $\A$ into the collection of finite
nonempty words from  $\A$, and
extends by concatenation to a map on finite or infinite words. The
\emph{abelianization} of $\phi$ is the
matrix $\bA=\bA_{\phi}$ with $ij$-th entry equal to the
number of $i$'s in $\phi(j)$. The substitution $\phi$ is \emph{primitive}
if the entries of $\bA^m$ are strictly positive for some
$m \ge 1$. In this case $\bA$ has a simple, positive,
Perron-Frobenius eigenvalue $\lambda=\lambda_{\phi}$ which we will call
the \emph{dilatation} of $\phi$.

\begin{definition}[Pisot Substitution]
A \emph{Pisot number} is an algebraic integer greater than $1$, all of
whose algebraic conjugates lie strictly inside the unit circle.
If the dilatation of a primitive substitution is a
Pisot number of algebraic degree $d$, then we say that the substitution
is \emph{Pisot of degree $d$}.
\end{definition}

Let $p(x)=x^d+a_{d-1}x^{d-1} + \cdots + a_0$
be the minimal polynomial of the dilatation.
The \emph{norm} of the dilatation is the product of the
dilatation and its algebraic conjugates and equals $(-1)^da_0$. We
call $a_0$ the \emph{constant coefficient} of the dilatation.

\begin{definition}[Irreducible Substitution]
The minimal polynomial $p(x)$ always divides the characteristic polynomial
of the abelianization of $\phi$.
If the two polynomials are equal,  then we say that the 
substitution is \emph{irreducible}.
\end{definition}

A substitution $\phi$ generates a collection of
bi-infinite \emph{allowed words}, namely those $\bar{w}=\ldots
w_{-1}w_{0}w_1\ldots$
for which each finite subword $w_i \ldots w_{i+j}$ is a
subword of $\phi^m(a)$ for some $m \in \N$ and $a \in \A$.
To each $a \in \A$ we associate a \emph{prototile} $P_a$
whose length equals the $a$-th component of the
left Perron-Frobenius eigenvector of $\bA_\phi$.
To each allowed word $\bar{w}$ we can associate continuously many
\emph{tilings} $\bT$, replacing each letter $w_i$ by
a copy of the corresponding prototile, and putting an \emph{origin}
somewhere in the obtained concatenation of tiles.
The \emph{tiling space}, $\Omega_\phi$, is the collection of all such tiles,
and it carries  a natural \emph{tiling flow}, $\bT = \{T_i\} \mapsto
\bT-t :=\{T_i-t\}$.

When studying substitutions, it is natural to require that the
substitution be irreducible and Pisot. However, irreducibility is not
a natural condition for tiling {\em spaces}, insofar as different
substitutions may give rise to the same tiling space. The
substitutions $1 \mapsto 21$, $2 \mapsto 1$ and $1 \mapsto 32$, $2
\mapsto 31$, $3 \mapsto 2$ both generate the Fibonacci tiling, but the
first is irreducible while the second is not. We therefore introduce a
topological condition,
noting that if two
one-dimensional substitution tiling spaces are homeomorphic, then the
tiling flow on one of them has pure discrete spectrum if and only if
the tiling flow on the other does. (This is a consequence of the
rigidity result of \cite{BSw}.) We also consider relaxing the
requirement of unimodularity, as there are many instances where pure
discreteness is known to hold when the Pisot dilatation has norm other
than $\pm1$ (in particular, for a large family of substitutions
associated with $\beta$-expansions - see \cite{BBK}).


\begin{definition}[Homological Pisot Substitution]
Let $\phi$ be a degree $d$ Pisot substitution.
If the dimension of the
first rational \Cech cohomology of the tiling space is $d$, then we
say that the substitution is \emph{homological Pisot of degree $d$}.
\end{definition}

The dilatation being a Pisot number is a
necessary and sufficient condition for the tiling flow spectrum to have a discrete component, 
see \cite{solomyak1}.
However, it is easy to find examples of substitutions with
Pisot dilatations whose spectra contain a continuous component. The simplest
example is the Thue-Morse substitution $1 \mapsto 12$, $2\mapsto 21$. To
eliminate the continuous spectrum, we must constrain the system
further, either through a combinatorial condition (irreducibility) or
through a topological condition (dimension of the first cohomology).

In this paper we study the consequences of a substitution being
homological Pisot, including (but not limited to) the question of
whether the tiling flow associated with such a substitution must
have pure discrete spectrum.
Note that the
homological Pisot condition is neither stronger nor weaker than the
irreducible Pisot condition.  It is easy to construct homological Pisot
substitutions that are not irreducible -- just rewrite any homological Pisot
substitution in terms of collared tiles, see \cite{AP}.
It is also easy to find examples
of irreducible Pisot substitutions whose first cohomology has dimension greater
than $d$ (\eg the substitution, $1\mapsto 21112$,
$2\mapsto 121$, is an irreducible degree $2$ Pisot substitution with an
asymptotic cycle, whose tiling space has $3$-dimensional first cohomology).
Nevertheless, the two notions are closely related: an irreducible Pisot substitution
that does not have asymptotic cycles must be homological Pisot. Also, if the
characteristic polynomial of the abelianization of a homological Pisot substitution
does factor over $\Z$, say as $p(x)q(x)$, with $p(x)$ the minimal polynomial of the dilatation,
then all roots of $q(x)$ are zero or roots of unity.

\subsection*{The Coincidence Rank Conjecture}
Every one-dimensional substitution
tiling space has a \emph{maximal equicontinuous factor} consisting of
a Kronecker flow on a torus or solenoid. The continuous map factoring
the tiling flow onto its maximal equicontinuous factor is called
\emph{{geometric realization}} and also factors the substitution
homeomorphism onto a hyperbolic automorphism of the torus or
solenoid. Geometric realization has the following properties
(\cite{BK, BBK}): it is nontrivial if and only if the
substitution is Pisot and, if the substitution is Pisot, then geometric
realization is boundedly finite-to-one and almost everywhere
$cr$-to-one for some positive integer $cr$ called the \emph{coincidence
rank} of the substitution. Furthermore, the tiling flow has pure
discrete spectrum if and only if geometric realization is a.e.\
$1$-to-$1$ (that is, if $cr=1$), in which case
geometric realization is a continuous measurable isomorphism from the
tiling flow onto a Kronecker action.


We can obtain a topological version of the Pisot Conjecture
by dropping the unimodular assumption
(\ie norm $=\pm 1$) and replacing
``irreducible Pisot'' by ``homological Pisot'', leading to the
conjecture:
$$
\text{\em
The tiling flow of a homological Pisot substitution has
pure discrete spectrum.}
$$
This conjecture is {\bf false}.  In the last
section of this paper we give several examples of homological Pisot substitutions with coincidence rank
$3$ and norm divisible by $3$, and in fact, such examples generalize
to every algebraic degree.

\begin{theorem}\label{nogo}  There are homological Pisot
  substitutions of every algebraic degree whose tiling flows do not
  have pure discrete spectrum.
\end{theorem}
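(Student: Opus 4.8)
The goal is to construct, for each $d \geq 1$, a homological Pisot substitution of degree $d$ whose tiling flow fails to have pure discrete spectrum — equivalently, by the geometric realization dichotomy quoted above, whose coincidence rank $cr$ exceeds $1$. The abstract tells us the intended examples have $cr = 3$ and norm divisible by $3$, so the guiding idea is to engineer a substitution where three distinct points of the tiling space are identified under geometric realization, while keeping the first rational \v Cech cohomology exactly $d$-dimensional.

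My plan is as follows. First, handle the base case $d=1$: a Pisot number of degree $1$ is just an integer $n \geq 2$, so I need a constant-length-$n$ substitution on an alphabet whose associated tiling space has $1$-dimensional first rational cohomology (forcing the substitution matrix, up to rewriting, to have a single relevant eigenvalue contributing to $H^1$) but which is not a coboundary/pure-discrete example. The standard obstruction to pure discreteness here is a \emph{coincidence failure} in the sense of Dekking: one wants a constant-length substitution on $\geq 3$ letters, with norm (hence $n$) divisible by $3$, where the Dekking coincidence criterion fails, producing $cr = 3$. I would write down an explicit such substitution — e.g. something like a length-$3$ substitution on three or more letters arranged so that the three ``columns'' never coincide — and verify (a) primitivity, (b) that $H^1$ is $1$-dimensional over $\Q$ (this is where rewriting in collared tiles, or a direct Barge–Diamond complex computation, comes in), and (c) that coincidence fails, so $cr > 1$. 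Second, for general $d$, I would take a genuinely degree-$d$ Pisot number $\lambda$ with norm divisible by $3$ and build a substitution whose abelianization has characteristic polynomial equal to (a power of $x$ times, or collared-rewriting of) the minimal polynomial $p(x)$ of $\lambda$ — guaranteeing degree $d$ and $\dim_\Q H^1 = d$ — while simultaneously carrying, in a ``tensor'' or ``skew-product'' fashion, the same coincidence-failure mechanism from the $d=1$ case. Concretely I expect to cross the known pure-discrete Pisot example of degree $d$ with the bad length-$3$ gadget, or to substitute the bad gadget into a one-tile-type-per-letter scheme, so that geometric realization onto the torus $\torus$ remains $3$-to-$1$ a.e.

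The key steps, in order: (1) fix $\lambda$ and $p(x)$ of degree $d$ with $3 \mid \text{norm}$; (2) exhibit an explicit alphabet and substitution rule $\vp$ whose substitution matrix has minimal polynomial $p(x)$ and characteristic polynomial with no extra roots off the unit circle beyond those already accounted for, so that the Barge–Diamond (or Anderson–Putnam) cohomology computation yields $\dim_\Q H^1 = d$; (3) compute the maximal equicontinuous factor, identify it as the Kronecker flow on $\torus$, and show geometric realization is exactly $3$-to-$1$ almost everywhere by exhibiting the three-fold identification directly (e.g. three proximal but distinct tilings mapping to the same torus point), or equivalently by showing the coincidence rank is $3$ via a proximality/asymptotic-pair count; (4) invoke the quoted fact that $cr > 1$ is equivalent to the absence of pure discrete spectrum.

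The main obstacle I anticipate is step (3) combined with the tension in step (2): it is easy to force coincidence to fail (making $cr > 1$) and easy to get a degree-$d$ Pisot dilatation, but keeping $H^1$ \emph{exactly} $d$-dimensional is delicate, since the natural tricks for breaking coincidence (adding letters, collaring) tend to inflate $H^1$. Resolving this requires a careful choice of alphabet size and substitution combinatorics so that the extra letters needed to support the three-fold coincidence failure contribute only eigenvalues on the unit circle (roots of unity), which do not raise the \emph{rational} cohomological dimension beyond $d$ — or, alternatively, contribute coboundaries that die in the inverse-limit cohomology. Verifying this cancellation, likely via an explicit Anderson–Putnam complex computation for each example, is the technical heart of the argument.
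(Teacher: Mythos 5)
Your overall strategy---start from a pure-discrete homological Pisot base of degree $d$ and pass to a three-fold cover (a skew product in which a permutation of $\{1,2,3\}$ is attached to each two-letter transition) so that geometric realization becomes a.e.\ $3$-to-$1$---is exactly the construction the paper uses, and you correctly identify the central difficulty: keeping $\dim_{\Q}\check H^1$ equal to $d$ after tripling the alphabet. But the proposal stops where the proof begins. The theorem is an existence statement whose entire content is the examples; ``I would write down an explicit such substitution'' and ``verifying this cancellation \ldots is the technical heart'' leave the base substitution, the cover, the verification that $cr=3$ (i.e.\ that the three sheets form a stable triple that never coincides), and the cohomology computation all undone. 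In particular, for general $d$ you must actually produce a primitive substitution realizing a degree-$d$ Pisot abelianization with norm divisible by $3$ that also supports a consistent permutation cocycle; the paper does this by taking $M_1=3M_0^k$ for a Pisot matrix $M_0$ with odd determinant and $k$ large, and by padding the substituted words with blocks of the form $(V B^{\mathrm{odd}} W A^{\mathrm{odd}} Y)^3$ that induce trivial permutations and equal populations of $x_1,x_2,x_3$, arranged so that $\phi_2(x_i)$ begins with $x_i$ and ends with $a_i$. None of that combinatorial care is addressed in your plan.

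There is also a substantive error in the one mechanism you do propose for the cancellation. You suggest the extra letters should ``contribute only eigenvalues on the unit circle (roots of unity), which do not raise the rational cohomological dimension beyond $d$.'' That is false: rational \Cech cohomology is computed from $\dlim \mathbf A^T$, and a root-of-unity eigenvalue is nonzero, so it does contribute to the direct limit over $\Q$. Indeed the paper's triple covers have eigenvalue $1$ with multiplicity two, which by itself would make $\dim_\Q\check H^1 = d+2$. What actually rescues the examples is the exact sequence \eqref{exact sequence}: the eventual range $G_0^{ER}$ of the transition subcomplex has exactly three contractible components (one per sheet of the cover), so $\tilde H^0(G_0^{ER})$ is two-dimensional and sits inside the $+1$ eigenspace, exactly absorbing the two extra eigenvalue-$1$ directions, while $H^1(G_0^{ER})=0$ because the components are trees. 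Your hedge about ``coboundaries that die in the inverse-limit cohomology'' gestures at this but is not an argument; making it one requires checking the component and loop structure of $G_0^{ER}$ for the specific substitutions, which is precisely the computation the proposal defers.
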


That the coincidence rank in these
examples divides the norm is not a coincidence and we are led to the
following conjecture.
\\[3mm]
\textbf{Coincidence Rank Conjecture}: The coincidence rank of a
homological Pisot substitution divides a power of the norm.  In
particular, the tiling flow of a unimodular homological Pisot
substitution has pure discrete spectrum.
\smallskip

\begin{theorem}\label{dim1} The Coincidence Rank Conjecture is true
if the degree of the substitution is one.
\end{theorem}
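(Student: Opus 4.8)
The plan is as follows. Since the substitution is homological Pisot of degree one, its dilatation $\lambda$ is a Pisot number of algebraic degree one, hence an integer $\ge 2$, and (from $p(x)=x-\lambda$) the norm of the dilatation equals $\lambda$; so what must be shown is that every prime dividing $cr$ divides $\lambda$. I would first normalize. The left Perron--Frobenius eigenvector of the abelianization lies in the kernel of the integer matrix $M-\lambda I$, a rational subspace, so the tile lengths are commensurable; after rescaling, and then subdividing each tile into unit tiles --- operations that only add forced decorations and leave the tiling space homeomorphic, hence change neither $cr$ nor the cohomology --- we may assume the substitution has constant length $\lambda$ on a finite alphabet, so that all patch lengths and all return lengths are integers. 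Moreover the substitution acts on $\check H^1(\Omega;\Q)\cong\Q$ by multiplication by $\lambda$, so $\check H^1(\Omega;\Z)$ modulo torsion is $\dlim(\Z\xrightarrow{\times\lambda}\Z\xrightarrow{\times\lambda}\cdots)\cong\Z[1/\lambda]$; this forces the substitution to have height $1$, so its subshift $\Xi$ has maximal equicontinuous factor the $\lambda$-adic odometer $\Z_\lambda=\inv_k\Z/\lambda^k\Z$, the space $\Omega$ is the suspension of $\Xi$ over the constant roof $1$, and the maximal equicontinuous factor of the tiling flow is the $\lambda$-adic solenoid; Haar measure of the clopen set $\{a\equiv j\pmod{\lambda^k}\}\subseteq\Z_\lambda$ is $\lambda^{-k}$.

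The key step is to extract from the Exact Regularity Property (established for homological Pisot substitutions elsewhere in the paper) that \emph{every cylinder set has measure in $\Z[1/\lambda]$}. Let $\mu$ be the unique invariant probability measure on $\Omega$ and $\nu$ the corresponding transverse measure on $\Xi$. Exact regularity says that for each patch $P$ the number of occurrences of $P$ in a return interval depends only on the length of that interval; feeding in the self-similarity of the substitution, which multiplies lengths by $\lambda$ and acts on patch-occurrence vectors by an integer matrix, yields integer recursions for these occurrence numbers in which lengths are divided by $\lambda$ at each stage. Hence the patch frequencies, and with them the measures of all cylinder sets --- $\nu$ of a clopen subset of $\Xi$, or $\mu$ of a patch-cylinder in $\Omega$, which is a frequency times an integer length --- lie in $\Z[1/\lambda]$. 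This is exactly where the hypothesis $\dim\check H^1=d=1$ is used: it is what forces the usual bounded cohomological fluctuation in the count of $P$ (valued, for a general substitution, in a nontrivial complement of the Perron--Frobenius eigendirection) to be an exact function of the return length, and in particular rules out a nontrivial height coprime to $\lambda$. I expect this deduction to be the main obstacle.

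Finally I would compare this constraint with the $cr$-to-one structure of geometric realization. Let $p\colon\Xi\to\Z_\lambda$ be the a.e.\ $cr$-to-one factor onto the odometer, so $p_*\nu$ is Haar. The substitution homeomorphism permutes the fibres of $p$ (which generically have exactly $cr$ points), and unique ergodicity then forces the conditional measures on the fibres to be uniform. Using that, for a homological Pisot substitution, the finitely many tilings sharing a given equicontinuous address differ from one another only inside patches of uniformly bounded size at controlled positions, one produces a cylinder set whose transverse trace $D\subseteq\Xi$ lies in $p^{-1}(U)$ for some clopen $U=\{a\equiv j\pmod{\lambda^k}\}$ and meets almost every fibre over $U$ exactly once; by the uniformity of the fibre measures, $\nu(D)=\tfrac1{cr}\lambda^{-k}$. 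Since $D$ is the trace of a cylinder set, the preceding paragraph gives $\tfrac1{cr}\lambda^{-k}\in\Z[1/\lambda]$, hence $\tfrac1{cr}\in\Z[1/\lambda]$. Therefore every prime dividing $cr$ divides $\lambda$, i.e.\ $cr$ divides a power of the norm, which is the Coincidence Rank Conjecture in degree one.
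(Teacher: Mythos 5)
Your overall strategy is the same as the paper's: reduce to constant length, use the ERP to show that cylinder-set measures lie in $\Z[1/N]$ (this is Theorem~\ref{rationalmeasure} specialized to $d=1$, where the derivative of the minimal polynomial is $1$ and $a_0=-N$), exhibit a union of cylinder sets of measure $1/cr$, and conclude that $cr$ divides $N^k$. The first two steps are essentially sound, modulo one repairable slip: being homological Pisot does \emph{not} force height $1$ (a substitution and its pure base have conjugate tiling flows up to rescaling, hence the same rational $\check H^1$, so height $h>1$ is compatible with $\dim\check H^1=1$ --- this is precisely why the paper bothers to pass to the ``pure core''). The correct fix is to pass to the pure core, which preserves $N$, $cr$ and the homological Pisot property and makes the tile and return lattices coincide, a hypothesis that Theorem~\ref{rationalmeasure} genuinely needs.

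The genuine gap is in your last paragraph. Your justification that ``the finitely many tilings sharing a given equicontinuous address differ from one another only inside patches of uniformly bounded size at controlled positions'' is false when $cr>1$: the $cr$ tilings in a generic fibre of geometric realization disagree on a relatively dense set of positions (that is what coincidence rank $>1$ means), and, worse, two fibre-mates can carry the same tile at the origin while still being distinct tilings, in which case any cylinder set determined by the tile at the origin meets that fibre at least twice. So there is no reason a single cylinder set should meet almost every fibre over a clopen $U$ exactly once, and the uniformity of fibre conditional measures is asserted rather than proved. The paper repairs exactly this point in two steps: first it shows (Theorem~\ref{factor}, itself an ERP argument) that identifying strongly coincident letters preserves the homological Pisot property and the coincidence rank, so one may assume no two letters are strongly coincident; then (Theorem~\ref{fraction}) it partitions the alphabet into $cr$ classes $B_j=\{a:\phi^n(a)_k=b_j\}$ for suitable $n,k$ realizing the coincidence rank, and shows --- by applying unique ergodicity to $cr$ substitution-periodic tilings forming a stable $cr$-tuple, with no disintegration into conditional measures --- that the union of the letter cylinders over each $B_j$ has measure exactly $1/cr$. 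You would need to supply something equivalent to these two steps to complete your argument.
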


\subsection*{Exact Regularity}
A \emph{patch} $\bP$ of a tiling $\bT$ is a collection
of contiguous tiles of $\bT$ and the \emph{support} of a patch
is the union of all the tiles in the patch. A \emph{vertex} is any
boundary point of two adjacent tiles in a tiling or patch.

The {\em tile lattice}, $\Gamma$, for the tiling space $\Omega_{\phi}$
is the additive subgroup of $\R$ generated by the lengths of the tiles
of $\Omega_{\phi}$.  A {\em return length} for the tiling space
$\Omega_{\phi}$ is a number $L>0$, such that $L$ is the distance between corresponding points 
in separate occurrences of the same tile type in $\bT$ for some $\bT\in \Omega_{\phi}$.  
The {\em return lattice} for $\Omega_{\phi}$ is the
subgroup of the tile lattice $\Gamma$ generated by $\{\lambda^{-i}L:L$
is a return length and $i \ge 0 \}\cap \Gamma$,
where $\lambda$
is the dilatation of $\phi$. These objects are called lattices, despite
typically being
dense in $\R$, as they are projections to $\R$ of lattices in $\Z^d$,
where $d$ is the number of letters in the substitution.
The tile and return lattices for a `proper' substitution are the same,
and a power of any primitive substitution can be `rewritten' to a proper
substitution -  see \eg \cite{BDproper}.

\begin{definition}[The Exact Regularity Property]
  Let $\lambda$ denote the dilatation of the
  substitution $\phi$ and let $L>0$ be in the return lattice for
  $\Omega_{\phi}$. We say that the tiling space $\Omega_\phi$ exhibits
  the \emph{Exact Regularity Property} $($ERP$)$ if for each patch
  $\bP$ of any tiling in $\Omega_\phi$ there is a length $L'$
  and a linear functional
  $\mathbf{N}_{\bP}:\Q(\lambda)\to \Q$ such
  that: if $\bQ$ and $\bQ+\tau$ are patches that occur
  in any tiling $\bT$ in $\Omega_{\phi}$ and $\bQ$ has a vertex
  $x_0$ with $[x_0-L',x_0+L']$ contained in the support of
  $\bQ$, then the number of occurrences of $\bP$ in
  $\bT$ between $x_0$ and $x_0+\tau$ is exactly
  $\mathbf{N}_{\bP}(\tau/L)$. That is, if $x_1$ is a vertex of
  $\bP$, then $\{t:0\le t<\tau$ and
  $\bP+x_0-x_1+t\subset \bT\}$ has cardinality
  $\mathbf{N}_{\bP}(\tau/L)$.
\end{definition}

\begin{theorem}[Exact Regularity]\label{ERP}
  Homological Pisot substitution spaces exhibit the Exact Regularity
  Property.  Moreover, given any patch $\bP$, if
  $\mathbf{N}_{\bP}$ is expressed in the form
  $\mathbf{N}_{\bP}(\sum_{i=0}^{d-1}c_i\lambda^i)=
  \sum_{i=0}^{d-1}\alpha_ic_i$, with $\alpha_i,c_i\in\Q$, then
  $\alpha_i\in \Z[1/a_0]$ for $i=0,\ldots,d-1$, where $a_0$ is
  the constant coefficient of $\lambda$.
\end{theorem}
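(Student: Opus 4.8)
The plan is to express the number of $\mathbf{P}$'s in $[x_0,x_0+\tau)$ as the pairing of the interval's ``level-$s$ supertile content vector'' with a fixed integer vector $\vec c$ which, thanks to the homological Pisot hypothesis, lies in a $d$-dimensional subspace canonically isomorphic to $\mathbb{Q}(\lambda)$; the recurrence hypothesis on $\mathbf{Q}$ is what turns an approximate equality into an exact one. First I would make two reductions. By the footnote I may, after replacing $\phi$ by a power (which does not change $\Omega_\phi$, replaces $\lambda$ by a Pisot number of the same field, and leaves the ring $\mathbb{Z}[1/a_0]$ unchanged since $|\mathrm{norm}(\lambda^m)|=|\mathrm{norm}(\lambda)|^m$), assume $\phi$ is proper; then the Anderson--Putnam complex is a wedge of $k$ circles, $\check H^1(\Omega_\phi;\mathbb{Z})\cong\varinjlim\bigl(\mathbb{Z}^k\xrightarrow{M^{\mathsf T}}\mathbb{Z}^k\to\cdots\bigr)$ with $M$ the substitution matrix and no coboundaries in play, and the tile lattice coincides with the return lattice $\Gamma$. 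Passing to collared tiles changes neither $\Omega_\phi$ up to homeomorphism nor $\lambda$ nor $\check H^1$, hence preserves the homological Pisot property, and lets me take $\mathbf{P}$ to be a single tile $i_0$, so that ``count of $\mathbf{P}$'' is carried by $e_{i_0}^{*}\in(\mathbb{Z}^k)^{*}$ and its iterates $(M^{\mathsf T})^{n}e_{i_0}^{*}$, whose $j$-th entry is the number of $\mathbf{P}$'s in $\phi^{n}(\text{tile }j)$.

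Next I would isolate the linear algebra. Fix $s$ with $\ker M^{s}=\ker M^{s+1}$ and set $U:=\operatorname{im}\bigl((M^{\mathsf T})^{s}\bigr)$, an $M^{\mathsf T}$-invariant subspace with $\mathbb{Q}^k=U\oplus\ker\bigl((M^{\mathsf T})^{s}\bigr)$. Since $\varinjlim(\mathbb{Q}^k,M^{\mathsf T})\cong U$, the homological Pisot hypothesis gives $\dim U=d$; the nonzero eigenvalues of $M$ are then exactly $\lambda$ and its $d-1$ conjugates, and $p$ evaluated at $M^{\mathsf T}$ vanishes on $U$. Because $\mathbb{Q}(\lambda)$ is a field, $U$ is a cyclic $\mathbb{Q}[M^{\mathsf T}]$-module, and one can choose rational vectors $b_{(0)},\dots,b_{(d-1)}$ spanning $U$ so that the length cochain $\vec\ell$ (an $M^{\mathsf T}$-eigenvector for $\lambda$, hence lying in $U\otimes\mathbb{R}$) equals $\sum_i\lambda^{i}b_{(i)}$; this both realizes the isomorphism $\check H^1(\Omega_\phi;\mathbb{Q})\xrightarrow{\ \sim\ }\mathbb{Q}(\lambda)$, $b_{(i)}\mapsto\lambda^{i}$, and shows $\Gamma\otimes\mathbb{Q}$ is a one-dimensional $\mathbb{Q}(\lambda)$-vector space, so $\tau/L\in\mathbb{Q}(\lambda)$ makes sense. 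Finally, $p(M^{\mathsf T})=0$ on $U$ gives $a_0(M^{\mathsf T}|_U)^{-1}=-\bigl((M^{\mathsf T}|_U)^{d-1}+a_{d-1}(M^{\mathsf T}|_U)^{d-2}+\dots+a_1\bigr)$, an integer polynomial in $M^{\mathsf T}|_U$; hence $\varinjlim(\mathbb{Z}^k,M^{\mathsf T})$ projects into $\mathbb{Z}[1/a_0]\!\cdot\!(\mathbb{Z}^k\cap U)$, which is the concrete source of the $\mathbb{Z}[1/a_0]$ in the statement (the same phenomenon as $\mathbb{Z}[\lambda,\lambda^{-1}]\subseteq\mathbb{Z}[1/a_0][\lambda]$, using $\lambda^{-1}=-a_0^{-1}(a_1+a_2\lambda+\dots+\lambda^{d-1})$).

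The heart of the proof is the counting step. Put $\vec c:=(M^{\mathsf T})^{s}e_{i_0}^{*}\in\mathbb{Z}^k\cap U$, whose $j$-th entry is the number of $\mathbf{P}$'s in the level-$s$ supertile $\phi^{s}(\text{tile }j)$ (half-open convention; after enlarging collars, no $\mathbf{P}$ straddles a supertile junction); by the previous paragraph $\vec c=\sum_i\alpha_i b_{(i)}$ with $\alpha_i\in\mathbb{Z}[1/a_0]$. Choose $L'$ larger than the diameter of a level-$s$ supertile plus the diameter of $\mathbf{P}$, and suppose $\mathbf{Q}$ has a vertex $x_0$ with $[x_0-L',x_0+L']\subseteq\operatorname{supp}\mathbf{Q}$ and $\mathbf{Q}+\tau$ also occurs in $\bT$. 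I would cut $[x_0,x_0+\tau)$ along the level-$s$ supertile tiling of $\bT$ into a left partial supertile, a run of whole level-$s$ supertiles, and a right partial supertile. The $\mathbf{P}$-count of each partial piece is determined by which level-$s$ supertile meets the endpoint and where in it the endpoint sits, hence by the radius-$L'$ pattern of $\bT$ about that endpoint; since $\mathbf{Q}$ occurs at $x_0$ and $\mathbf{Q}+\tau$ at $x_0+\tau$, these two patterns agree, so the left partial count about $x_0$ and the right partial count about $x_0+\tau$ are the two complementary pieces, split at the vertex, of one and the same level-$s$ supertile and sum to its full $\mathbf{P}$-count. Therefore the number of $\mathbf{P}$'s in $[x_0,x_0+\tau)$ equals $\vec N\cdot\vec c$, where $\vec N\in\mathbb{Z}^k$ records the whole level-$s$ supertiles in the length-$\tau$ interval running from the left edge of the supertile through $x_0$, and $\tau=\langle\vec N,\lambda^{s}\vec\ell\rangle$. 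Writing $\tau/L=\sum_i c_i\lambda^{i}$, the identities $\tau=\sum_i\langle\vec N,b_{(i)}\rangle\lambda^{s+i}$ and $\vec N\cdot\vec c=\sum_i\alpha_i\langle\vec N,b_{(i)}\rangle$ exhibit the count as a fixed $\mathbb{Q}$-linear functional $\mathbf{N}_{\mathbf{P}}$ of $\tau/L$; its coefficients stay in $\mathbb{Z}[1/a_0]$ because the $\alpha_i$ do and the passage from the $\lambda^{s+i}$ to the power basis $1,\dots,\lambda^{d-1}$ is an invertible change with $\mathbb{Z}[1/a_0]$ entries (multiplication by $\lambda^{\pm s}$). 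This is precisely the ERP with the asserted integrality.

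The main obstacle is the reassembly argument of the third paragraph: one must verify carefully that a radius-$L'$ pattern about a vertex genuinely pins down both the level-$s$ supertile through that vertex and the vertex's position inside it, and keep scrupulous track of the half-open conventions and of the $\mathbf{P}$'s lying near supertile junctions or near $x_0$ and $x_0+\tau$ --- it is to control these that $L'$ must exceed the supertile diameter and that collaring and the choice of $s$ are used. A secondary, more bookkeeping-heavy point is the arithmetic in the second paragraph: unpacking ``$\dim\check H^1=d$'' into ``$M$ has exactly $d$ nonzero eigenvalues, equal to $\lambda$ and its conjugates'' (which is why one first makes $\phi$ proper, so that $\check H^1$ is governed by $M$ itself rather than by a smaller quotient matrix), choosing the basis $\{b_{(i)}\}$ and the isomorphism $\check H^1\otimes\mathbb{Q}\cong\mathbb{Q}(\lambda)$ compatibly with the integral lattice, and confirming that the only denominators that survive all of this are powers of $a_0$.
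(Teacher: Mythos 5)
Your route is genuinely different from the paper's. The paper works entirely with rational pattern-equivariant cohomology: it takes the indicator $1$-cochain $\alpha$ of $\mathbf{P}$, uses the homological Pisot hypothesis to write $\alpha=\delta(\beta)+\sum_i\alpha_i\xi_i$ in the basis $[\xi_0],\dots,[\xi_{d-1}]$ of Lemma~\ref{lemL1}, sets $L'=\max\{r,|\mathbf{P}|\}$ so that the coboundary term telescopes away on $[x_0,x_0+\tau]$, and then obtains $\alpha_i\in\Z[1/a_0]$ by evaluating $\alpha-\delta(\beta)$ on return vectors $\lambda^{k+i}L''$ (where it is manifestly an integer, being a count) and using that $a_0^{k}L''$ is an integer linear combination of $\lambda^{k}L'',\dots,\lambda^{k+d-1}L''$. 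Your supertile/direct-limit argument is a legitimate alternative that makes the geometric content more explicit, but note first that, as you partly acknowledge, taking $L'$ larger than the supertile diameter plus $|\mathbf{P}|$ is not enough for the reassembly step: you need the radius-$L'$ pattern to determine the level-$s$ supertile through a vertex and the vertex's offset within it, which is Moss\'e recognizability, so $L'$ must also dominate the level-$s$ recognizability radius. That is fixable.

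The genuine gap is in the integrality. From the fact that $a_0(M^{\mathsf T}|_U)^{-1}$ is an integer polynomial in $M^{\mathsf T}|_U$ you correctly conclude that the direct limit sits inside $\Z[1/a_0]\cdot(\Z^k\cap U)$, but you then assert that the coordinates $\alpha_i$ of $\vec c\in\Z^k\cap U$ \emph{in the particular basis} $b_{(0)},\dots,b_{(d-1)}$ lie in $\Z[1/a_0]$. That does not follow from anything preceding it: the $b_{(i)}$ are pinned down only by the normalization $\vec\ell=\sum_i\lambda^i b_{(i)}$, which is sensitive to the overall scale of $\vec\ell$ (equivalently of $L$); rescaling $L$ by a prime not dividing $a_0$ rescales every $\alpha_i$ by that prime, so no statement of this shape can be true until the normalization of $L$ by an actual return length enters the argument. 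The missing step is precisely the paper's: the functional takes \emph{integer} values on the return vectors $\lambda^{j}L$ for all large $j$ (those values are honest patch counts $\vec N\cdot\vec c\in\Z$), and since $a_0^{k-i}\lambda^{i}L$ is an integer linear combination of $\lambda^{k}L,\dots,\lambda^{k+d-1}L$, each $a_0^{k-i}\alpha_i$ is an integer. With that inserted your proof closes; without it the $\Z[1/a_0]$ claim is unsupported.
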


\begin{theorem}\label{ERPconverse} Any Pisot substitution that exhibits
the Exact Regularity Property is a homological Pisot substitution.
\end{theorem}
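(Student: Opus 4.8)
The plan is to run the construction in Theorem \ref{ERP} in reverse: ERP gives us, for each patch, a linear functional on $\mathbb{Q}(\lambda)$, and the collection of these functionals should be rich enough to pin down the dimension of $H^1(\Omega_\phi;\mathbb{Q})$. Recall that for a one-dimensional substitution tiling space the first rational \Cech cohomology can be computed as a direct limit of the transpose of the substitution matrix (acting on the $\mathbb{Q}$-vector space spanned by the tile types, after passing to collared tiles if necessary); equivalently, $H^1(\Omega_\phi;\mathbb{Q})$ is the space of ``asymptotic'' frequency-type functionals on patches modulo those that are coboundaries. Since the substitution is assumed Pisot of degree $d$, the minimal polynomial $p(x)$ of $\lambda$ divides the characteristic polynomial of the abelianization, so $\dim_{\mathbb{Q}} H^1(\Omega_\phi;\mathbb{Q}) \ge d$ always. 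Thus the entire content is the reverse inequality $\dim_{\mathbb{Q}} H^1(\Omega_\phi;\mathbb{Q}) \le d$.

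First I would set up the pairing. Fix a reference tiling $\bT$ and a vertex; for a patch $\mathbf{P}$ and a length $\tau$ in the return lattice, let $f_{\mathbf{P}}(\tau)$ be the number of occurrences of $\mathbf{P}$ in $\bT$ over a stretch of length $\tau$ starting from a vertex with a large enough buffer (as in the ERP statement). The ERP says precisely that $f_{\mathbf{P}}(\tau) = \mathbf{N}_{\mathbf{P}}(\tau/L)$, a $\mathbb{Q}$-linear function of $\tau/L \in \mathbb{Q}(\lambda)$, hence factors through the $d$-dimensional $\mathbb{Q}$-vector space $\mathbb{Q}(\lambda)$. Next I would observe that the classes in $H^1(\Omega_\phi;\mathbb{Q})$ are detected by exactly these occurrence counts: a standard description (using the \v{C}ech--Alexander--Spanier or pattern-equivariant picture, or the explicit $PE$-cohomology of Kellendonk--Putnam) realizes $H^1$ as the quotient of integer-valued functions ``counting occurrences of patches along translations'' by the coboundaries, tensored with $\mathbb{Q}$. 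The map $\mathbf{P} \mapsto \mathbf{N}_{\mathbf{P}}$ then descends to a $\mathbb{Q}$-linear map $H^1(\Omega_\phi;\mathbb{Q}) \to \mathrm{Hom}_{\mathbb{Q}}(\mathbb{Q}(\lambda),\mathbb{Q})$, and I claim it is injective: if a cohomology class has $\mathbf{N} \equiv 0$, then all patch counts along return vectors vanish to linear order, which (again by the direct-limit structure, using primitivity to guarantee that long return lengths see every patch with the asymptotically correct frequency) forces the class to be a coboundary. Injectivity into a $d$-dimensional space gives $\dim_{\mathbb{Q}} H^1 \le d$, and combined with $\ge d$ we get equality, i.e. the substitution is homological Pisot.

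The main obstacle I expect is the injectivity step — making precise the claim that a tiling-space cohomology class that produces the zero occurrence-functional must be trivial. This requires identifying $H^1(\Omega_\phi;\mathbb{Q})$ concretely with a space of (germs of) linear growth rates of patch-occurrence counts, which is where the Pisot hypothesis is used: for a Pisot substitution the expanding direction of $\lambda$ controls the leading-order count, while the contracting conjugate directions contribute only bounded fluctuations, so ``linear-order count $=0$'' really does see the whole rational cohomology rather than just a top graded piece. Concretely I would realize $H^1$ via the direct limit of $A^T$ on $\mathbb{Q}^{\text{(tiles)}}$ and note that the eigenvalue-$\lambda$ generalized eigenspace is exactly $d$-dimensional with minimal polynomial $p$; the ERP functional sees this whole generalized eigenspace (the functional $\mathbf{N}_{\mathbf{P}}$, being $\mathbb{Q}(\lambda)$-linear, carries the full $\mathbb{Q}[\lambda]$-module structure), while any eigenvalue of modulus $<1$ would produce occurrence counts whose deviation from linear growth is unbounded unless the corresponding component already vanishes — precisely the statement that ERP holds. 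Hence no ``extra'' cohomology beyond the $d$-dimensional piece can survive, and the paper's earlier computation identifying the $\lambda$-part of $H^1$ with $\mathbb{Q}(\lambda)$ closes the argument.
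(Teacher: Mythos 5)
Your overall strategy --- pair cohomology classes against return lengths to get a linear map into $\mathrm{Hom}_{\Q}(\Q(\lambda),\Q)$ and deduce $\dim \check H^1 \le d$ --- is the same as the paper's, and the reduction is sound: indicator cochains span the pattern-equivariant $1$-cochains, each indicator cochain has an ERP functional, and coboundaries pair to zero against buffered return chains. The gap is in the injectivity step, which you correctly identify as the crux but do not actually prove. What must be shown is: if $\gamma$ is a pattern-equivariant $1$-cochain that evaluates to zero on every chain $[x_0,x_0+\tau]$ of return type (with an $L'$-buffer around $x_0$), then $\gamma$ is a coboundary. Your justification replaces the hypothesis ``evaluates to \emph{exactly} zero'' with ``vanishes to linear order'' and then argues spectrally in $\dlim \mathbf{A}^T$; but vanishing to linear order is far too weak --- for a generic (non-homological) Pisot substitution there are nontrivial classes with zero asymptotic frequency, namely those supported on eigenvalues of $\mathbf{A}^T$ other than $\lambda$ and its conjugates, and these would all lie in your kernel, so the argument as stated would ``prove'' that every Pisot substitution is homological Pisot. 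Your attempted repair (``any eigenvalue of modulus $<1$ would produce occurrence counts whose deviation from linear growth is unbounded'') is false: contracting eigenvalues produce bounded, indeed decaying, deviations. What rules them out is precisely the \emph{exactness} in the ERP, not a growth estimate, and the Pisot hypothesis plays no role (the paper notes that nothing in Section~\ref{sec:exact_regularity} uses it).

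The step you are missing is a short potential-function construction. Given a patch $\mathbf{P}$ with indicator cochain $\alpha$ and ERP coefficients $\alpha_i$, set $\gamma:=\alpha-\sum\alpha_i\xi_i$ and $g(x):=\gamma([x_0,x])$ for a fixed vertex $x_0$. If $x$ and $y$ are vertices whose $L'$-neighborhoods in $\bT$ agree, then $[x,y]$ is itself a buffered return chain, so $\gamma([x,y])=0$ and $g(x)=g(y)$; hence $g$ is pattern-equivariant of radius $L'$ and $\gamma=\delta g$. Thus every indicator class lies in the span of $[\xi_0],\ldots,[\xi_{d-1}]$, which together with Lemma~\ref{lemL1} gives $\dim\check H^1(\Omega_\phi,\Q)=d$. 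If you insert this argument in place of the spectral heuristics, your proof closes.
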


If $\phi$ is irreducible Pisot, then the tiling flow
$\bT=\{T_i\} \mapsto \bT-t :=\{T_i-t\}$
preserves a unique measure $\mu$.
Using the ERP, we derive restrictions on the measures of measurable sets
in the tiling space.

\begin{theorem}\label{rationalmeasure} Let $\phi$ be a homological
  Pisot substitution of algebraic degree $d$ and constant coefficient
  $a_0$. Suppose that the tile lattice and the return lattice for $\phi$ are the same. If some finite
disjoint union of cylinder sets in the tiling space
  $\Omega_\phi$ has rational measure $n/m$, with $n$ and $m$
  relatively prime, then $m$ divides $d \cdot a_0^k$ for some positive
  integer $k$.
\end{theorem}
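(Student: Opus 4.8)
The plan is to combine the Exact Regularity Property (Theorem~\ref{ERP}) with ergodicity of the tiling flow to pin down the measures of cylinder sets. First I would recall that, since $\phi$ is primitive, the tiling flow on $\Omega_\phi$ is uniquely ergodic, so there is a unique invariant probability measure $\mu$, and the measure of a cylinder set $C_{\mathbf{P},U}$ (tilings containing a translate of the patch $\mathbf{P}$ by an amount in the small interval $U$) is $|U|$ times the asymptotic frequency of $\mathbf{P}$ per unit length. Concretely, for a generic tiling $\bT$ and a long interval $[x_0,x_0+\tau]$ with $x_0$ a vertex sufficiently deep inside a patch (so the ERP hypothesis on $[x_0-L',x_0+L']$ is met), the number of occurrences of $\mathbf{P}$ is \emph{exactly} $\mathbf{N}_{\mathbf{P}}(\tau/L)$; dividing by $\tau$ and letting $\tau\to\infty$ along the tile lattice $\Gamma=$ the return lattice shows that the frequency of $\mathbf{P}$ equals $\frac{1}{L}\,\mathbf{N}_{\mathbf{P}}(\lambda^0) = \frac{\alpha_0}{L}$ in the notation of Theorem~\ref{ERP}, i.e.\ the coefficient $\alpha_0\in\mathbb{Z}[1/a_0]$ extracted from the ``$\lambda^0$'' slot. (One must check that sending $\tau\to\infty$ through $\Gamma$ really does pick out the $c_0$-coefficient and that the error from the finitely many boundary corrections is $o(\tau)$; both are routine given the linearity of $\mathbf{N}_{\mathbf{P}}$ and bounded finite local complexity.)

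Next I would translate this into a statement about $\mu$ of a cylinder set. Fixing a base vertex type, the cylinder set where $\mathbf{P}$ occurs with its chosen vertex in an interval of length $\ell$ (with $\ell$ in the return lattice, small) has $\mu$-measure equal to $\ell$ times the frequency, hence lies in $\frac{1}{L}\ell\cdot\mathbb{Z}[1/a_0]$. Normalizing: the total measure of $\Omega_\phi$ is $1$, and the same frequency computation applied to the trivial patch (a single tile, or the whole disjoint collection of one-tile cylinders) forces the normalizing constant; after clearing this, the upshot is that $\mu$ of \emph{any} cylinder set lies in $\frac{1}{L}\mathbb{Z}[1/a_0]$ where $L$ is a fixed return length. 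A finite disjoint union of cylinder sets then also has measure in $\frac{1}{L}\mathbb{Z}[1/a_0]$. At this point we know any rational measure $n/m$ (in lowest terms) satisfies $m \mid L\,(a_0)^k$ for some $k$. Since $L$ is merely \emph{some} return length, we have not yet seen the factor of $d$; that has to come from a sharper choice.

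The remaining — and I expect hardest — step is to show the denominator can always be taken to divide $d\,(a_0)^k$ rather than $L\,(a_0)^k$. The idea is that one is free to choose the return length $L$, and the claim must be that there is a return length, or rather a rational combination of return lengths lying in the return lattice, whose ``size'' relative to the normalization contributes at most a factor of $d$. Here is where the homological Pisot hypothesis ($H^1$ of $\Omega_\phi$ is exactly $d$-dimensional over $\mathbb{Q}$) is essential: the rational \v Cech cohomology is generated by $d$ classes, and the image of $H^1$ under integration against $\mu$ (the ``Ruelle--Sullivan'' map, equivalently the map recording patch frequencies) lands in a rank-$\le d$ subgroup of $\mathbb{R}$; combined with the eigenvalue structure of the substitution matrix acting on $H^1$ (whose only nonzero action, after passing to the $d$-dimensional piece, is multiplication essentially by $\lambda$ with characteristic polynomial $p(x)=x^d+\cdots+a_0$), the group of achievable measures is $\mathbb{Z}[1/a_0]$-module of rank $d$, and the normalization constant relating it to total mass $1$ introduces exactly the factor $d$. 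So the plan for this step is: (i) identify the frequency module $\mathcal{M}=\{\mu(C):C\text{ a finite union of cylinders}\}$ with the image of $H^1(\Omega_\phi;\mathbb{Z})\otimes\mathbb{Z}[1/a_0]$ under integration against $\mu$; (ii) use the hypothesis that tile lattice $=$ return lattice to ensure this integration map is onto a rank-$d$ $\mathbb{Z}[1/a_0]$-lattice; (iii) compute that $1\in\mathcal{M}$ forces $\mathcal{M}\subseteq \frac{1}{d}\mathbb{Z}[1/a_0]$, perhaps by a trace/determinant argument on the $d\times d$ matrix of the substitution action relative to a $\mathbb{Z}[1/a_0]$-basis, where $\det$ involves $a_0$ and the index computation involves $d$. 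The main obstacle is making (ii)--(iii) precise: specifically, showing that the normalization ``total mass $=1$'' cannot create a denominator worse than $d$, which amounts to controlling the index of the sublattice generated by the $\mathbf{N}_{\mathbf{P}}$-functionals inside the full dual lattice. I would attack this by writing everything in terms of a single adapted $\mathbb{Z}[1/a_0]$-basis of $\mathbb{Q}(\lambda)$ coming from powers of $\lambda$, using Theorem~\ref{ERP}'s integrality of the $\alpha_i$, and reading off the denominator of the coefficient that expresses $1$ (the total length) in that basis — the claim being that this denominator divides $d$ times a power of $a_0$.
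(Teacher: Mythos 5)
There is a genuine gap, and it sits exactly where you locate the difficulty. Your frequency computation is wrong: you claim that letting $\tau\to\infty$ through the return lattice ``picks out the $c_0$-coefficient,'' so that the frequency of $\mathbf{P}$ is $\alpha_0/L$. Writing $\tau=L\lambda^k=L\sum_i c_{k,i}\lambda^i$, the exact count is $\sum_i\alpha_i c_{k,i}$ and the frequency is $\lim_k\sum_i\alpha_i c_{k,i}/(L\lambda^k)$. The vector $(c_{k,0},\dots,c_{k,d-1})^T/\lambda^k$ does \emph{not} converge to $(1,0,\dots,0)^T$; it converges to the Perron right eigenvector $\vec r$ of the companion matrix of $p(x)$, normalized by $(1,\lambda,\dots,\lambda^{d-1})\vec r=1$, whose $i$-th entry is $(\lambda^{d-1-i}+a_{d-1}\lambda^{d-2-i}+\cdots+a_{i+1})/p'(\lambda)$. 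Hence every cylinder-set measure has the form $q(\lambda)/\bigl(a_0^k\,p'(\lambda)\bigr)$ with $q\in\Z[x]$, and this is precisely where the factor of $d$ comes from: if $q(\lambda)/\bigl(a_0^k p'(\lambda)\bigr)=n/m$ in lowest terms, then $m\,q(\lambda)=n\,a_0^k\,p'(\lambda)$, and since $1,\lambda,\dots,\lambda^{d-1}$ are $\Q$-independent, $m$ divides every coefficient of $a_0^k p'(x)$ --- in particular the leading one, $d\,a_0^k$. (The paper actually gets the sharper conclusion $m\mid a_0^k\gcd\bigl(d,(d-1)a_{d-1},\dots,a_1\bigr)$.)

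Because your frequency formula is wrong, your proposed substitute for the factor of $d$ --- the Ruelle--Sullivan/lattice-index argument in which ``normalizing total mass to $1$ introduces exactly the factor $d$'' --- has nothing concrete to latch onto; you flag steps (ii)--(iii) as the obstacle yourself, and nothing in the sketch identifies $d$ with a computable invariant. The correct source of $d$ is simply the leading coefficient of the derivative $p'(x)$ of the minimal polynomial, entering through the eigenvector normalization above; no index or determinant computation is needed. A smaller point: to apply the ERP to an arbitrary patch you must first decompose $S_{\mathbf{P}}$ into partial cylinder sets indexed by occurrences of $\mathbf{P}$ carrying a collar of length at least $L'$ on each side (the paper uses words $uw_iv$ with $u,v$ of length $l$); your sketch acknowledges this only in passing, but it is what makes the count exact rather than exact-up-to-boundary-terms.
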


If $d=1$, it is possible to construct a disjoint union of cylinder sets of
measure $1/cr$ (Proposition~\ref{fraction}).
Theorem~\ref{rationalmeasure} then implies Theorem~\ref{dim1}.

It is even possible to extend the idea of the ERP to one-dimensional
tiling spaces that do not come from a substitution, (\cite{Sadun2}).
If the first rational \Cech cohomology of a tiling
space $\Omega$ is $k$-dimensional, then we can find $k$ different
collections of patches (call them patches of type 1, 2, $\ldots$,
$k$), such that for any other patch $\bP$ there is a length $L'$ with
the following property: If $\bQ$ and $\bQ+\tau$ are
patches that occur in any tiling $\bT$ in $\Omega_\phi$ and $\bQ$
has a vertex $x_0$ with $[x_0-L',x_0+L']$ contained in the support of
$\bQ$, then the number of occurrences of $\bP$ in $\bT$
between $x_0$ and $x_0+\tau$ is a rational linear function of $n_1,
\ldots, n_k$, where $n_i$ is the number of occurrences of patches of
type $i$ between $x_0$ and $x_0 + \tau$. When the tiling space
comes from a substitution and $k$ is the algebraic degree of
the dilatation, then the $i$-th class of patches
is associated with tiles of length $L \lambda^{i-1}$.

\subsection*{Organization of the paper}
In Section~\ref{sec:background},
we develop necessary background and notation. In Section~\ref{sec:exact_regularity}, we explore the ERP and prove Theorems~\ref{ERP}, \ref{ERPconverse}
and \ref{rationalmeasure}. The proofs of these theorems do not use the
fact that the dilatation is a Pisot number, and the results of
Section~\ref{sec:exact_regularity} apply to all substitutions for which the dimension of the first \v{C}ech cohomology equals the algebraic degree of the dilatation.

In Section~\ref{sec:d=1},
we study $d=1$ homological Pisot substitutions and prove
Theorem~\ref{dim1}.  We also show, regardless of the
norm:
\begin{theorem}\label{not2} If $\phi$ is a homological Pisot
  substitution with $d=1$, then the coincidence rank is not $2$.
\end{theorem}
Finally, in Section~\ref{sec:examples}
we show how to construct homological Pisot
substitutions of arbitrary $d$ with $cr=3$, thereby proving Theorem~\ref{nogo}.

\section{Background}\label{sec:background}
A \emph{substitution} is a function $\phi:\A \rightarrow \A^*$ from a
finite \emph{alphabet} $\A$ into the collection $\A^*$ of finite
nonempty words in $\A$. A substitution extends by concatenation to a
map on finite or infinite words and can be iterated: $\phi^m$ will
stand for $\phi \circ \phi \circ \cdots \circ \phi$, $m$ times.  The
\emph{abelianization} or \emph{substitution matrix}  of $\phi$ is the
matrix $\bA=\bA_{\phi}$ with $ij$-th entry equal to the
number of $i$'s in $\phi(j)$. The substitution $\phi$ is \emph{primitive} if the entries of $\bA^m$ are strictly positive for some
$m \ge 1$. In this case $\bA$ has a simple, positive,
Perron-Frobenius eigenvalue $\lambda=\lambda_{\phi}$ which we will call
the \emph{dilatation} of $\phi$.  If $\phi$
is primitive, we will use $\omega^l$ and $\omega^r$ to denote left
and right positive eigenvectors of $\bA$.



To construct the \emph{tiling space} associated with
the primitive substitution $\phi$ we first form the collection
$\bX_{\phi} \subset \A^{\Z}$ consisting of all
bi-infinite \emph{allowed words} for $\phi$: $\bar{w}=\ldots
w_{-1}w_{0}w_1\ldots \in \bX_{\phi}$ if and only if for each
$i \in \Z$, and each $j \ge 0$, there is an $m \in \N$ and $a \in
\A$ so that the word $w_i \ldots w_{i+j}$ is a factor
(subword) of $\phi^m(a)$. If $\A=\{1,\ldots,n\}$ and the left
eigenvector of $\bA$ is $\omega^l=(\omega_1,\ldots,\omega_n)$,
the intervals $P_i=[0, \omega_i]$, $i=1,\ldots,n$, are called
\emph{prototiles} (in case $\omega_i=\omega_j$ for $i\ne j$, we label
$P_i$ and $P_j$ so as to make them distinct). The
tiling space, $\Omega_{\phi}$, associated with $\phi$ is the
collection of all tilings of $\R$ by translates of prototiles
following patterns of allowed bi-infinite words.

A \emph{patch} $\bP$ of a tiling $\bT$ is a subcollection
of contiguous tiles of $\bT$ and the \emph{support} of a patch
is the union of all the tiles in the patch. We will denote the
diameter of the support of $\bP$ by $|\bP|$ and call
this the \emph{length of $\bP$}.

There is a natural \emph{tiling flow}, $\bT=\{T_i\} \mapsto
\bT-t :=\{T_i-t\}$.  We put a metric on $\Omega_{\phi}$
with the property that $\bT$ and $\mathbf{T'}$ are close if
small translates of $\bT$ and $\mathbf{T'}$ agree in a large
neighborhood of the origin.
%
%
If $\phi$ is primitive then, with this metric, $\Omega_{\phi}$ is a
continuum (compact and connected) and the tiling flow is minimal and
uniquely ergodic.

The substitution $\phi$ is \emph{aperiodic} provided there are no
flow-periodic tilings in $\Omega_\phi$.
There is also a $\Z$-action on $\Omega_{\phi}$ induced by
substitution. To define this, suppose that $i \in \A$ and
$\phi(i)=i_1\cdots i_k$. Define $\Phi$ on prototiles by
\begin{equation*}\label{Phi}
\Phi(P_i):=\{P_{i_1},
P_{i_2}+\omega_{i_1},\ldots,P_{i_k}
+\omega_{i_1}+\omega_{1_2}+\cdots +\omega_{i_{k-1}}\}.
\end{equation*}
Extend this to tiles by
$\Phi(P_i+t):=\Phi(P_i)+\lambda t$,
and extend to tilings by $\Phi(\{T_i\}):=\cup \Phi(T_i)$.
As long as $\phi$ is primitive and
aperiodic, $\Phi$ is a homeomorphism, see \cite{recog, solomyak}.
We also recall the machinery of pattern-equivariant cohomology with
rational coefficients from \cite{Kellendonk, KP, Sadun}.
A tiling $\bT \in \Omega_\phi$ gives the real line the
structure of a CW complex, with the vertices serving as $0$-cells and
the tiles serving as $1$-cells.
\begin{definition}
  A rational $0$-cochain is said to be \emph{pattern-equivariant with
    radius $R$} if, whenever $x$ and $y$ are vertices of $\bT$ and the
  radius $R$ neighborhoods $\mathbf{B}_R[\bT-x]=\mathbf{B}_R[\bT-y]$,
  the cochain takes the same values at $x$ and $y$. A \ $0$-cochain is
  \emph{pattern-equivariant} if it is pattern-equivariant with radius
  $R$ for some finite $R$. Pattern-equivariant $1$-cochains are
  defined similarly -- their values on a $1$-cell depend only on the
  pattern of the tiling out to a fixed finite distance around that
  $1$-cell.
\end{definition}
If $\beta$ is a rational pattern-equivariant $0$-cochain, its
coboundary, $\delta(\beta)$, is a rational pattern-equivariant
$1$-cochain, and we define the rational first pattern-equivariant
cohomology of $\bT$ to be the cokernel of the coboundary map $\delta$.
A priori this would seem to depend on $\bT$, but this cohomology is
the same for all $\bT \in \Omega_\phi$ and is isomorphic to $\check
H^1(\Omega_\phi, \Q)$, see \cite{Kellendonk, KP, Sadun}.


Finally, we recall a procedure for computing the first \v Cech
cohomology of a tiling space, using the machinery of
\cite{BDcohomology}. To each primitive one-dimensional substitution
$\phi$ on $n$ letters, one can associate a graph $G$.  This graph has
two kinds of edges. There is one edge $e_i$ for each letter $a_i$ of
the alphabet, and one edge $v_{ij}$ for each two-letter word
$a_ia_j$. The edge $e_i$ has length $\omega_i-\epsilon$ and represents
the bulk of a tile of type $a_i$, while $v_{ij}$ has length $\epsilon$
and represents the transition from the end of a tile of type $i$ to
the beginning of a tile of type $j$. We identify the end of $e_i$ with
the beginning of $v_{ij}$, and the end of $v_{ij}$ with the beginning
of $e_j$. There is also a sub-complex $G_0$ obtained from just the $v$
edges.

After applying a small homotopy, we may assume that substitution maps
$G_0$ to itself. If $\phi(a_i)$ ends with $a_k$, and if $\phi(a_j)$
begins with $a_\ell$, then $\phi(v_{ij})=v_{k\ell}$. Let $G_0^{ER}$ be
the eventual range of $G_0$ under this map. Since substitution
permutes the edges of $G_0^{ER}$, we can replace $\phi$ with a power
that fixes each edge of $G_0^{ER}$. Then there is an exact sequence,
\cite{BDcohomology}
\begin{equation}\label{exact sequence}
0 \to \tilde H^0(G^{ER}_0) \to \dlim {\mathbf A^T} \to
\check H^1(\Omega_\phi) \to H^1(G^{ER}_0) \to 0
\end{equation}
that computes $\check H^1(\Omega_\phi)$. Furthermore, each map in this
exact sequence commutes with substitution, so the image of $\tilde
H^0(G_0^{ER})$ lies in the $+1$ eigenspace of $\mathbf A^T$. Since the
dilatation and its algebraic conjugates are all eigenvalues of
$\mathbf A^T$, the dimension of $\check H^1(\Omega_\phi,\Q)$ is at
least $d$, and equals $d$ only if three conditions are met: (1) the
only eigenvalues of $\mathbf A^T$ are 0, 1, $\lambda$, and the
algebraic conjugates of $\lambda$, (2) the algebraic multiplicity of
the eigenvalue $1$ is one less than the number of components of
$G_0^{ER}$, and (3) $G_0^{ER}$ has no loops.

\section{Exact Regularity}\label{sec:exact_regularity}

Given a substitution $\phi$ with dilatation $\lambda$ of degree $d$,
let us select a return length $L$. Then for every (by minimality of
the flow) $\bT \in \Omega_{\phi}$, $\bT$ and $\bT - L$ have nonempty
intersection.  It follows that for any $L'$, there exists a $k$ such
that $\phi^k (\bT)$ and $\phi^k(\bT) - \lambda^k L$, and hence
$\bT$ and $\bT-\lambda^kL$, intersect in a patch of
length at least $L'$.

As $1,\lambda,\lambda^2,\ldots,\lambda^{d-1}$ are linearly independent
over $\Q$, the length of each patch $\bP$ can be expressed
uniquely in the form $|\bP|=L\sum_{i=0}^{d-1}c_i\lambda^i$ with
$c_i=c_i(|\bP|)\in \Q$. Given any tiling $\bT\in
\Omega_{\phi}$, let $\xi_i$ be the pattern equivariant $1$-cochain on
$\bT$ defined by $\xi_i(T):=c_i(|T|)$ for each $T\in \bT$.

\begin{lemma}\label{lemL1} The cohomology classes $[\xi_0],\ldots,[\xi_{d-1}]$
  are linearly independent in the pattern-equivariant cohomology of
  $\Omega_{\phi}$.
\end{lemma}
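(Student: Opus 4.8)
We want to show $[\xi_0], \ldots, [\xi_{d-1}]$ are linearly independent in $\check H^1(\Omega_\phi, \Q)$. Suppose $\sum_{i} q_i [\xi_i] = 0$ for rationals $q_i$, not all zero. This means $\sum_i q_i \xi_i = \delta\beta$ for some pattern-equivariant rational $0$-cochain $\beta$. So for every tiling $\bT$ and every edge (tile) $T$ of $\bT$, writing $T$'s endpoints as vertices $x$ (left) and $y$ (right), we have $\sum_i q_i c_i(|T|) = \beta(y) - \beta(x)$.

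The plan is to prove that the $\Q$-linear map
$\Psi\colon\Q^d\to\check H^1(\Omega_\phi,\Q)$ sending $e_0,\dots,e_{d-1}$ to
$[\xi_0],\dots,[\xi_{d-1}]$ is injective, by coupling the behaviour of the cochains $\xi_i$ under the substitution homeomorphism $\Phi$ with the irreducibility of the minimal polynomial $p$ of $\lambda$.

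First I would record how the $\xi_i$ transform under $\Phi$. Since $\Phi$ replaces each tile $T$ of a tiling by a patch of total length $\lambda|T|$, and $\xi_i$ is additive over the tiles of a patch, one gets $(\Phi^*\xi_i)(T)=c_i(\lambda|T|)$; writing the matrix of ``multiplication by $\lambda$ on $\Q(\lambda)$'' in the basis $1,\lambda,\dots,\lambda^{d-1}$ as $M$ (the companion matrix of $p$), this says $\Phi^*\xi_i=\sum_j M_{ij}\xi_j$ as cochains. Passing to cohomology and using that $\Phi$ is a homeomorphism of $\Omega_\phi$ — so it induces an automorphism of $\check H^1(\Omega_\phi,\Q)$ which, in the pattern-equivariant description, is the above pullback of cochains — we obtain $\Phi^*\circ\Psi=\Psi\circ M^T$. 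In particular $\ker\Psi$ is an $M^T$-invariant $\Q$-subspace of $\Q^d$.

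Next I would use that $M$, hence $M^T$, has characteristic polynomial $p$, which is irreducible over $\Q$ since it is the minimal polynomial of $\lambda$; thus $(\Q^d,M^T)$ is a simple $\Q[x]$-module, so $\ker\Psi$ is either $0$ or all of $\Q^d$. To exclude the second alternative it suffices to show the $\xi_i$ are not all coboundaries: if $\xi_i=\delta\beta_i$ with each $\beta_i$ pattern-equivariant, then for every patch $\mathbf{P}$ with left endpoint $x$ and right endpoint $y$ one has $c_i(|\mathbf{P}|)=\sum_{T\in\mathbf{P}}\xi_i(T)=\beta_i(y)-\beta_i(x)$, which is bounded because a pattern-equivariant cochain takes only finitely many values; hence $|\mathbf{P}|=L\sum_i c_i(|\mathbf{P}|)\lambda^i$ would be bounded over all patches, contradicting that a tiling of $\R$ contains patches of arbitrarily large length. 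Therefore $\ker\Psi=0$, which is exactly the asserted linear independence.

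I expect the only delicate point to be the first step: confirming that $\Phi^*$ is genuinely well defined on pattern-equivariant cohomology and that, through the isomorphism with $\check H^1(\Omega_\phi,\Q)$, it intertwines $\Psi$ with $M^T$ precisely as claimed (in particular that the cochain identity $\Phi^*\xi_i=\sum_j M_{ij}\xi_j$ holds on the nose, not merely up to coboundary). Once that is in place, the algebra (irreducibility of $p$, simplicity of the module) and the boundedness contradiction are routine.
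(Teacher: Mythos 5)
Your argument is correct, but it follows a genuinely different route from the paper's. The paper works directly at the cochain level: if $\sum\alpha_i\xi_i=\delta\beta$ with $\beta$ pattern-equivariant of radius $r$, it evaluates this on chains $[x_0,x_0+\lambda^{k+i}L]$ where, for $k$ large, $\lambda^{k+i}L$ is a return length between patches of size exceeding $2r$, so that $\delta\beta$ contributes zero and $\sum_j\alpha_j c_j(\lambda^{k+i}L)=0$ for all $i\ge 0$; since $a_0\lambda^{-1}=-(\lambda^{d-1}+a_{d-1}\lambda^{d-2}+\cdots+a_1)$ shows that $\lambda^kL,\dots,\lambda^{k+d-1}L$ span $L\cdot\mathbb{Q}(\lambda)$ over $\mathbb{Q}$, the functional $\sum_j\alpha_jc_j$ vanishes identically and each $\alpha_i=0$. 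You instead use the exact cochain identity $\Phi^*\xi_i=\sum_jM_{ij}\xi_j$ to make $\ker\Psi$ an $M^T$-invariant subspace, invoke irreducibility of the minimal polynomial to conclude $\ker\Psi$ is $0$ or all of $\mathbb{Q}^d$, and rule out the latter by noting that a pattern-equivariant $0$-cochain takes only finitely many values (finite local complexity), so $|\mathbf{P}|=L\sum_ic_i(|\mathbf{P}|)\lambda^i$ would be bounded over all patches. All three steps are sound; the only care needed is that $\Phi^*$ preserves pattern-equivariant coboundaries, which holds because $\Phi$ is a cellular substitution map commuting with $\delta$ (you do not actually need it to be a homeomorphism for the invariance of $\ker\Psi$). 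Your reduction is conceptually cleaner and isolates exactly where irreducibility enters; the paper's more computational evaluation on return lengths has the advantage that the identical manipulation is reused in the proof of Theorem~\ref{ERP} to extract the integrality statement $\alpha_i\in\mathbb{Z}[1/a_0]$, which your argument does not yield.
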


\begin{proof}

  Suppose that $\alpha=\sum \alpha_i\xi_i = \delta(\beta)$, where
  $\beta$ is a pattern-equivariant $0$-cochain and each $\alpha_i$ is
  rational. For large enough $k$, $\lambda^k L$ will be a return
  length between patches of size greater than twice the radius of
  $\beta$.  Therefore, $\alpha$ applied to such a return patch of length
$\lambda^{k+i} L$ must be
  zero for $i\geq 0$. But $\alpha$ applied to a patch depends only on
the length of the patch, so $\alpha$ applied to {\em any} patch of length
$\lambda^{k+i}L$ must be zero.
As $\lambda^d + a_{d-1} \lambda^{d-1} + \cdots +
  a_0 = 0$, after division by $\lambda$, we have $a_0 \lambda^{-1} =
  -(\lambda^{d-1} + a_{d-1} \lambda^{d-2} + \cdots + a_1).$ Taking the
  $(k-i)th$ power of the last equation, we have $a_0^{k-i}
  \lambda^{i-k}$ as a polynomial in $\lambda$ with integer
  coefficients.  Hence, $a_0^{k-i} L \lambda^i$ is an integer linear
  combination of $\lambda^k L$, $\lambda^{k+1}L$, $\ldots
  \lambda^{k+d-1}L$, so $\alpha_i$ must be zero.
\end{proof}

If $\phi$ is a homological Pisot substitution,
$\{[\xi_0],\ldots,[\xi_{d-1}]\}$ is a basis for the first cohomology
of any tiling in $\Omega_{\phi}$.  We use this to prove the Exact
Regularity Property (Theorem \ref{ERP}) and its converse
(Theorem~\ref{ERPconverse}).


\begin{proof}[Proof of Theorem~\ref{ERP}]
  Let $\alpha$ be a pattern-equivariant $1$-cochain on $\bT$
  (which we call an \emph{indicator cochain}) that
  evaluates to $1$ at each occurrence of the patch $\bP$ and is
  identically zero away from $\bP$. That is, choose a tile
  $T\in\bP$ and define $\alpha$ by: if $T'$ is any tile of
  $\bT$, then $\alpha(T')=1$ if there is a $t\in \R$ such that $T+t=T'$ and
  $\bP+t\subset \bT$, and $\alpha(T') =0$ otherwise. We must
  then have $\alpha = \delta(\beta) + \sum \alpha_i \xi_i$, where
  $\beta$ is a pattern-equivariant $0$-cochain of some radius $r$ and
  the coefficients $\alpha_i$ are rational. Let
  $L'=max\{r,|\bP|\}$ and suppose that patch $\bQ\subset
  \bT$ has a vertex $x_0$ with $ [x_0-L',x_0+L']$ contained in the
  support of $\bQ$. Suppose also that $\bQ+\tau\subset
  \bT$. Since $\beta(x_0+ \tau)=\beta(x_0)$, $\alpha([x_0, x_0+\tau])
  = \sum \alpha_i \xi_i ([x_0,x_0+\tau]) = \sum \alpha_i
  c_i(\tau)$. On the other hand, $\alpha([x_0, x_0+\tau])$ is the
  number of occurrences of $P$ between $x_0$ and $x_0+\tau$, this
  number being unambiguous (\ie independent of choice of $T\in
  \bP$) since $L'\ge |\bP|$.

  If $L''$ is any return length, then for large $k$, $\lambda^{k+i}
  L''$ is a return length between patches of size greater than $L'$
  for all $i\ge 0$.  Then $\sum \alpha_i \xi_i=\alpha-\delta(\beta)$
  applied to a patch of length $\lambda^{k+i} L''$ yields an integer.
  However, $a_0^k L''$ is an integer linear combination of $\lambda^k
  L''$, $\lambda^{k+1} L'', \ldots, \lambda^{k+d-1} L''$.  This
  implies that $\sum \alpha_i \xi_i$ applied to a patch of length
  $L''$ yields an integer divided by a power of $a_0$.  Since $\sum
  \alpha_i \xi_i$ applied to any return word yields
  an integer divided by a power of $a_0$, and since $L \lambda^i$ is
  in the return lattice, all coefficients $\alpha_i$ must be integers
  divided by powers of $a_0$.
\end{proof}


\begin{proof}[Proof of Theorem~\ref{ERPconverse}]
  Let $\alpha$ be an indicator cochain for a patch $\bP$ (as in
  the proof above). There are then $\alpha_i \in \Q$ so that
  $\gamma:=\alpha-\sum \alpha_i \xi_i$ vanishes on chains of the form
  $[x_0,x_0+\tau]$ with $x_0$ a vertex in a patch $\bQ$ of
  $\bT$, $[x_0-L',x_0+L']$ contained in the support of $\bQ$,
  and $\bQ+\tau\subset\bT$. Let $\bQ$ and $x_0$ be such
  a patch and vertex and define the $0$-cochain $g$ by
  $g(x):=\gamma([x_0,x])$ for each vertex $x$ of $\bT$ (here $[x_0,x]$
  means $-[x,x_0]$ for $x<x_0$, and we take $g(x_0)=0$). Then $g$ is
  pattern-equivariant and $\delta g=\gamma$. Since the indicator
  cochains span the pattern-equivariant $1$-cochains, the $[\xi_i]$ form
  a basis for the first cohomology of $\bT$.
\end{proof}

Having established constraints on the number of occurrences of
$\bP$  for any return length $L$ (or at least $\lambda^k L$ for  sufficiently large $k$), we establish constraints on the measure of
the cylinder set of $\bP$ with respect to the unique
translation-invariant measure $\mu$ on $\Omega_{\phi}$ .  Let
$S_{\bP}$ be the set of all tilings in which the origin lies
inside a $\bP$ patch.

\begin{proposition}
  Suppose that $\phi$ is a homological Pisot substitution of degree
  $d$ for which the tile and return lattices are the same, and let $\AA = d \lambda^{d-1} + \sum_{i=1}^{d-1} i a_i
  \lambda^{i-1}$ be the derivative of the minimal polynomial of the
  dilatation $\lambda$ of $\phi$, evaluated at $\lambda$.
  Then $\displaystyle
  \mu(S_{\bP})=\frac{q_{\bP}(\lambda)}{a_0^k \AA}$,
  where $k$ is an integer and $q_{\bP}(\lambda)$ is a
  polynomial in $\lambda$ with integer coefficients.
\end{proposition}

\begin{proof}
  Let $\bP$ be a patch based on the word $w=w_1\cdots w_l$.  Given any
  patch $\bQ$, let $S_{\bQ}^j$ denote the (partial)
  cylinder set consisting of all tilings for which the origin not only
  lies in a $\bQ$ patch, but in the $j$th tile of the
  $\bQ$ patch. We may then express $S_{\bP}$ as a (measurably)
  disjoint union of sets of the form $S_{\mathbf {Q}}^{l+1}$ where
  $\bQ$ has underlying word $uw_iv$, $u$ and $v$ $l$-letter
  words, $i\in \{1, \ldots,l\}$.  For $\bQ$ based on the word
  $uw_iv$ and $m$ such that the length of $\phi^m(w_i)$ is a return
  length, we take $L=\lambda^m |w_i|$. To compute the measure of
  $S^{l+1}_{\bQ}$, we merely count how many times $\bQ$
  occurs in an interval of length $L \lambda^k$, divide by
  $\lambda^k$, take the limit as $k \to \infty$, and then multiply by
  $|w_i|/L$.  To find the limit, we write $\lambda^k =
  \sum_{i=0}^{d-1} c_{k,i} \lambda^i$ and take the limit of $\sum
  \alpha_i c_{k,i} / \lambda^k$. The limit of $\vec r = \lim_{k \to
    \infty} (c_{k,0}, \ldots, c_{k,d-1})^T / \lambda^k$ is a right
  eigenvector of the companion matrix
\begin{equation*}
  C = \begin{pmatrix} 0 & 0 & \cdots & 0 & -a_0 \cr
    1 & 0 & \cdots & 0 & - a_1 \cr
    0 & 1 & \cdots & 0 & -a_2 \cr
    \vdots &  & \ddots & \vdots & \vdots \cr
    0 & 0 & \cdots & 1 & -a_{d-1}
\end{pmatrix},
\end{equation*}
normalized so that $(1, \lambda, \ldots, \lambda^{d-1}) \vec r = 1$.
This eigenvector is
\begin{equation*}
  \vec r = \frac{1}{\AA} \begin{pmatrix}
    \lambda^{d-1} + a_{d-1} \lambda^{d-2} + \cdots + a_1  \cr
    \lambda^{d-2} + a_{d-1} \lambda^{d-3} + \cdots + a_2 \cr
    \lambda^{d-3} + a_{d-1} \lambda^{d-4} + \cdots + a_3 \cr
    \vdots \cr
    \lambda + a_{d-1} \cr
    1
\end{pmatrix}.
\end{equation*}
Since each entry of $\vec r$ is a polynomial in $\lambda$ divided by
$\AA$, and since each $\alpha_i$ is an integer divided by a power of
$a_0$, the measure of $S^{l+1}_{\bQ}$ is of the indicated form.
It follows that $S_{\bP}$ has the desired form as well.
\end{proof}

\begin{lemma}\label{lemma10}
  Let $\phi$ be a homological Pisot substitution of degree $d$ and dilatation $\lambda$ for which the tile and return lattices are the same. If some
  cylinder set in $\Omega_{\phi}$ has rational measure $n/m$, with $n$
  and $m$ relatively prime, then $m$ divides $a_0^k \gcd(\AA)$ for
  some $k \in \N$, where $\gcd(\AA)$ is the greatest common divisor of
  the coefficients of $\AA$ and $a_0$ is the constant coefficient of $\lambda$.
\end{lemma}

\begin{proof}
  If $\displaystyle \frac{q_{\bP}(\lambda)}{a_0^k\AA} =
  \frac{n}{m}$ , then $m q_{\bP}(\lambda) = a_0^k \AA n$.  But
  this means that $m$ divides every coefficient of $a_0^k \AA$, and so
  divides $a_0^k \gcd(\AA)$.
\end{proof}

Theorem~\ref{rationalmeasure} is an immediate corollary of Lemma~\ref{lemma10}.

\section{Substitutions with $d=1$}\label{sec:d=1}

If $d=1$, then the dilatation is an integer $N$, and the norm is $N$
itself. Under these circumstances, we can replace the substitution
with an `equivalent' substitution that has constant length
using the following technique.

First scale the left Perron-Frobenius eigenvector so that all entries
are integers whose greatest common factor is one. That is, choose all
of the tiles to have integral length. Then subdivide each tile into
smaller pieces, each of length one. The substitution, written in terms
of these pieces, will have constant length $N$.
If the resulting substitution has return lattice $h \Z$, with $h>1$, we can regroup
these pieces into tiles of size $h$ and rescale by $h$. The resulting
substitution has constant length, with both the tile and return
lattices equaling $\Z$, and is called the \emph{pure core} of the
original substitution.
This new substitution
and the original substitution have conjugate tiling flows, hence their coincidence ranks
are the same and one is homological Pisot if and only if the other is.

Consider the substitution $\phi$ of constant length $N$ acting on the
alphabet $\A_\phi$.  We say that two words $w_1 w_2 w_3 \dots$ and
$v_1 v_2 v_3 \dots$ are \emph{coincident} if there exists a $k$ such
that $w_k = v_k$.  We say that the letters $a$ and $b$ in $\A_\phi$
are \emph{eventually coincident} if there exist a $k$ and $n$ such
that $\phi^n(a)_k = \phi^n(b)_k$, where $\phi^n(a)_k$ denotes the
$k$th letter of $\phi^n(a)$.  We say that the letters $a$ and $b$ are
\emph{strongly coincident} if for each $n \geq 0$ and each $i \in \{1,
\dots, N^n \}$, the $i$th letters of $\phi^n(a)$ and $\phi^n(b)$ are
eventually coincident.

\begin{proposition}\label{factor}
  Let $\phi$ be a substitution of constant length $N$ and let $\phi'$
  be the substitution obtained by identifying the letters in $\A_\phi$
  that are strongly coincident. If $\phi$ is a homological Pisot
  substitution, then so is $\phi'$.
\end{proposition}

\begin{proof}
  If $\phi$ is a homological Pisot substitution, then $\phi$ has the
  Exact Regularity Property by Theorem~\ref{ERP}. Since every patch
  $\bP'$ in a tiling of $\Omega_{\phi'}$ corresponds to a finite set of
  patches $\{\bP_1, \ldots, \bP_n\}$ in $\Omega_\phi$, and since each of
  these patches $\bP_i$ is governed by the ERP, we will show that $\bP'$
  is governed by the ERP, implying that $\phi'$ is a homological Pisot
  substitution.

  The patches $\bP_i$ exhibit the ERP with different lengths $L'_i$. Let
  $L'=\max\{L'_i\}$.
  Pick any patch ${\bf S}'$ in $\Omega_{\phi'}$ with a vertex that is
  a distance at least $L'$ from each end. ${\bf S}$ corresponds to a finite
  set $\{ {\bf S}_1, \ldots, {\bf S}_m\}$ of patches in $\Omega_\phi$,
  each with a vertex of distance at least $L'$ from the end. By the ERP
  for each ${\bf P}_i$, between any two successive ${\bf S}_i$'s, there are
  exactly the right number of $\bP_1$'s, the right number of $\bP_2$'s,
  etc, so between any two ${\bf S}'$'s that correspond to the same ${\bf S}_i$,
  there are exactly the right number of $\bP'$'s. The problem is that
  different occurrences of ${\bf S}'$ may correspond to different ${\bf S}_i$'s.

  The discrepancy in how many extra $\bP'$s occur between an ${\bf
    S}_i'$ and an ${\bf S}_j'$ depends only on $i$ and $j$, since the
  discrepancy between any two ${\bf S}_i'$s, or any two ${\bf S}_j'$s,
  is zero.  In particular, there exist numbers $z_1,\ldots,z_m$ so
  that there are exactly $z_i-z_j$ extra $\bP'$'s between any ${\bf
    S}_i'$ and any ${\bf S}_j'$.  Order the images such that $z_1 \ge
  z_2 \ge \ldots \ge z_m$.
Since $\phi$ is a primitive substitution, $\Omega_\phi$ is
repetitive, so every sufficiently large patch (say, of diameter $D$)
contains at least one copy
of ${\bf S}_1$ and at least one copy of ${\bf S}_m$.

  Now let ${\bf Q}'$ be any patch of size greater than $D$.
  Each patch ${\bf Q}_i$ in $\Omega_\phi$ that
  maps to ${\bf Q}'$ must contain an ${\bf S}_1$ and an ${\bf
    S}_m$.  The images of the ${\bf S}$ patches with maximal and
  minimal values of
  $z_i$ can be identified, in ${\bf Q}'$, by the $z_1-z_m$ extra (or
  missing) ${\bf P}'$'s that occur between them, which can only occur
  between an ${\bf S}_i'$ with $z_i=z_1$ and an ${\bf S}_j'$ with
  $z_j=z_m$.  This means that the locations of patches ${\bf S}_i'$
  with $z_i=z_1$ line up exactly for any two patches ${\bf Q}'$, so
  the number of $\bP'$'s between any two ${\bf Q}'$'s is exactly
  correct.
\end{proof}

We define a \emph{stable pair} as two letters in $\A_\phi$ which are
not eventually coincident and extend this idea to arbitrary
collections of letters: a \emph{stable $m$-tuple} is a collection
$\{a_1, a_2, \dots, a_{m} \}$ of
letters from $\A_\phi$ such that for
all $n$ and $0\le k\le N^n$, $\{ \phi^n(a_1)_k, \phi^n(a_2)_k, \dots,
\phi^n(a_{m})_k \}$ has cardinality $m$. For constant length
substitutions, the coincidence rank, $cr$, is the minimum cardinality
of $\{ \phi^n(a_1)_k, \dots, \phi^n(a_{|\A_\phi|})_k \}$ for $0\le
k\le N^n,n\in \Z$, see \cite{BBK}. Hence $m=cr$ is the maximal
cardinality of a stable $m$-tuple and it follows that the coincidence
ranks of $\phi$ and $\phi'$ are the same. Therefore, to obtain
constraints on $cr$ for homological Pisot substitutions, it suffices
to consider substitutions for which no two letters are strongly
coincident.

\begin{proposition} \label{fraction}
If $\phi$ is a homological Pisot substitution of constant
length $N$ and coincidence rank $cr$, with no strongly coincident pairs of
letters, then there exists a set of letters whose cylinder sets have total
measure $1/cr$.
\end{proposition}

\begin{proof}
  For sufficiently large $n$, there exists a $k$ such that
  $\{\phi^n(a_i)_k\}$, $i \in \{1, \dots, |\A_\phi|\}$ consists of
  exactly $cr$ elements. (Asymptotically, the number of such $k$,
  divided by $N^n$, approaches $1$, \cite{Dek}). Let $n$ and $k$ be as
  described above, and let the set $\{\phi^n(a_i)_{k} \}$ be given by
  $\{b_1, b_2, \dots, b_{cr} \}$.  This set is a stable
  $cr$-tuple. There are then tilings $\bT^i\in
  \Omega_{\phi},i=1,\ldots,cr$, and an $m\in \N$, so that
  $\Phi^m(\bT^i)=\bT^i$ and $0$ is the left endpoint of a tile of type
  $b_i$ in $\bT^i$.  Partition $\A_\phi$ into sets $
  B_j:=\{a:\phi^n(a)_k = b_j\}$, $j \in \{1, \dots, cr \}$. Let $C_j$
  be the union of the cylinder sets corresponding to letters in $B_j$;
  $C_j:= \{\bT: 0\in T_0,T_0\in \bT, T_0$ of type $a$ for some $a\in
  B_j\}$. For fixed $j$ and for each $t\in \R^+\setminus \N$, exactly
  one of the tilings $\bT^i$ is in $C_j$. Thus $1=\lim_{\tau \to
    \infty} \frac{1}{\tau}
  \int_0^{\tau}\sum_{j=1}^{cr}\chi_{C_j}(\bT^j-t)\,dt=\sum_{j=1}^{cr}\lim_{\tau
    \to \infty}  \frac{1}{\tau}
  \int_0^{\tau}\chi_{C_j}(\bT^j-t)\,dt=cr\mu(C_j)$, by unique
  ergodicity of the tiling flow. Thus, for each $j$, the measure of
  the union of the cylinder sets of the elements of $B_j$ is $1/cr$.
\end{proof}

By Theorem~\ref{rationalmeasure}, this implies that $cr$ divides a power of
$N$, thereby proving Theorem~\ref{dim1}, namely that
the Coincidence Rank Conjecture holds for $d=1$.


We finish this section with the proof of Theorem~\ref{not2},
namely
that a homological Pisot
substitution with $d=1$ cannot have $cr=2$.

\begin{proof}If $N$ is odd, this follows from
  Theorem~\ref{dim1}. Suppose, then, that $N$ is even and $cr=2$. We
  will prove that $\dim(H^1)>1$. Every element of $\A_\phi$ must be a
  member of a stable pair by primitivity and the fact that there exist
  $n, k$ such that $\{ \phi^n(a_1)_k, \phi^n(a_2)_k , \dots,
  \phi^n(a_{|\A_\phi|})_k \}$ has cardinality $cr$.  Suppose that $a$
  is in a stable pair with $a'$ and $a''$, $a' \neq a''$.  Then $a'$
  and $a''$ must be strongly coincident, or $cr = 2$ is violated.  By
  identifying strongly coincident letters, and recalling the results
  of Proposition~\ref{factor}, it suffices to consider the case of an
  alphabet of $2m$ letters $\{ a_1, \ldots, a_m, a_1', \ldots,
  a_m'\}$, where $a_i$ and $a_i'$ form a stable pair.

  Ordering the columns and rows of the abelianization by $a_1,
  \ldots, a_m, a_1', \ldots, a_m'$, the matrix takes the form
  ${\mathbf A} =
\begin{pmatrix}A & B \cr B & A
\end{pmatrix}$, where
$A$ and $B$ are $m\times m$ matrices, In particular, the trace of $\mathbf A$
is even.

The space  $G_0^{ER}$ appearing in \eqref{exact sequence}
is the direct limit under
substitution of the graph obtained from all the edges $v_{ij}$ that
describe two-letter words $a_ia_j$ and likewise edges $v_{i'j}$,
$v_{ij'}$ and $v_{i'j'}$.

The symmetry that exchanges each $a_i$ for $a_i'$ maps the components
of $G^{ER}_0$ to each other. If any component is mapped to itself,
then there exists a loop in $G^{ER}_0$, implying that the dimension of
$\check H^1(\Omega_\phi)$ exceeds $d$. To see this, suppose that
$v_{ij}$ and $v_{i'j'}$ are in the same component. Then there exists a
path in the graph connecting these two edges.  Letting $v_{ij} =
v_{i_1 j_1}$, this path may be labeled as
$v_{{i_1}{j_1}}$,$v_{{i_2}{j_1}}$,$v_{{i_2}{j_2}}$, $v_{{i_3}{j_2}}$,
\dots, $v_{i'j'}$.  Since the existence of the word $a_i a_j$ in the
substitution implies the appearance of $a_i' a_j'$ as well, the path
$v_{i'j'} = v_{{i_1'}{j_1'}}$, $v_{{i_2'}{j_1'}}$, $v_{{i_2'}{j_2'}}$,
\dots, $v_{ij}$ exists also, creating a loop in $G^{ER}_0$.

If no component is mapped to itself, then there are an even number of
components in $G_0^{ER}$, which come in symmetric pairs.  Since $\phi$
is a homological Pisot substitution, the dimension of $\check
H^1(\Omega_\phi)$ is one.  This implies that the nonzero eigenvalues
of $\mathbf A^T$ must be $N$ (with multiplicity one) and $1$ (with odd
multiplicity). This is impossible, since $N$ is even and the trace of
$\mathbf A^T$ is even.
\end{proof}

\section{Examples}\label{sec:examples}

In this section we provide examples of homological Pisot substitutions
with $cr>1$. Specifically, all examples have $cr=3$ and all
dilatations have norm divisible by $3$. In each case we first construct
a homological Pisot substitution with $cr=1$ and alphabet
$\A_1=\{A,B,\ldots\}$, and then construct another homological Pisot
substitution with alphabet $\A_2=\{a_1,a_2,a_3,b_1,b_2,b_3,\ldots\}$
that is a triple cover of the first,
and hence has $cr=3$.

\smallskip
\noindent\textbf{Example 1: $d=1$}
\smallskip

We take a $2$-letter alphabet $\A_1 = \{A,B\}$ and a substitution
\begin{equation*}
 \begin{array}{rcl}
\phi_1(A) & = & ABABAAABA \cr
\phi_1(B) & = & BAAABAABA
\end{array}
\quad \text{ with abelianization } \quad \begin{pmatrix} 6 & 3 \cr 6 & 3
\end{pmatrix}
\end{equation*}
Since all substituted letters end in $A$, the first cohomology of
$\Omega_{\phi_1}$ is the direct limit of the abelianization
(see \cite{BD}) and has dimension one.

For $\A_2$, we associate three letters to each letter in $\A_1$, and a
permutation of those three letters to each transition $AA$, $AB$, $BA$
or $BB$.  (Actually, there are no transitions $BB$, but we include a
permutation as a demonstration of the method.) Specifically, to the
transition $AA$ we associate $a_1a_3$, $a_3a_1$ and $a_2a_2$. To $AB$
we associate $a_1b_2$, $a_2b_1$ and $a_3b_3$. To $BA$ we associate $b_1a_3$,
$b_3a_1$ and $b_2a_2$. To $BB$ we associate $b_1b_2$, $b_2b_1$, and $b_3b_3$.
In short, we are allowing transitions
$\{a_1 \hbox{ or } b_1\}\{a_3 \hbox{ or } b_2\}$,
$\{a_2 \hbox{ or } b_2\}\{a_2 \hbox{ or } b_1\}$, and
$\{a_3 \hbox{ or } b_3\}\{a_1 \hbox{ or } b_3\}$.

Note that there are exactly three words in the alphabet $\A_2$
associated to each word in the alphabet $\A_1$. There are three choices
on what the first letter should be, and the rest of the word is
determined by the allowed transitions. These three words disagree at each
point.

The substitution that generates these words is
\begin{equation}\label{example}
\begin{array}{lcr}
\phi_2(a_1) & = & a_1 b_2 a_2 b_1 a_3 a_1 a_3 b_3 a_1 \cr
\phi_2(a_2) & = & a_2 b_1 a_3 b_3 a_1 a_3 a_1 b_2 a_2 \cr
\phi_2(a_3) & = & a_3 b_3 a_1 b_2 a_2 a_2 a_2 b_1 a_3 \cr
\phi_2(b_1) & = & b_1 a_3 a_1 a_3 b_3 a_1 a_3 b_3 a_1 \cr
\phi_2(b_2) & = & b_2 a_2 a_2 a_2 b_1 a_3 a_1 b_2 a_2 \cr
\phi_2(b_3) & = & b_3 a_1 a_3 a_1 b_2 a_2 a_2 b_1 a_3
\end{array}
\quad \begin{array}{l}
\text{with abel-} \\
\text{ ianization }
\end{array} \quad
\begin{pmatrix}
3& 2& 1& 3& 1& 2 \cr
1& 2& 3& 0& 4& 2 \cr
2& 2& 2& 3& 1& 2 \cr
1& 1& 1& 1& 1& 1 \cr
1& 1& 1& 0& 2& 1 \cr
1& 1& 1& 2& 0& 1
\end{pmatrix}
\end{equation}
Note that for each $x_i$ (where $x=a$ or $b$), $\phi(x_i)$ begins with
$x_i$ and ends with $a_i$. This ensures that substituted words will have
the same allowed transitions as the original words.

The abelianization
has eigenvalues $9,1,1,0,0,0$. Since the direct limit
of the transition graph has 3 components, each of which is contractible,
this implies that $\check H^1(\Omega_\phi, \Q)$ is $1$-dimensional.

\smallskip
\noindent\textbf{Example 2: $d=2$}
\smallskip

We take $\A_1$ and $\A_2$ as before, only now $\phi_1$ is a substitution
of degree $2$, with abelianization
$\left ( \begin{smallmatrix} 9 &  6 \cr  6 & 3
\end{smallmatrix} \right )$
 and dilatation $6+3\sqrt{5}$:
\begin{equation*}
\begin{array}{rcl}
\phi_1(A) & = & A B A B A A A B A B A B A B A \cr
\phi_1(B) & = & B A A A B A A B A
\end{array}
\end{equation*}
This is just like the $d=1$ example, only with a $BABABA$ suffix applied
to $\phi_1(A)$. Construct $\phi_2$ similar to \eqref{example}.
This suffix induces a trivial permutation, and so continues to allow us to
have $\phi_2(x_i)$ begin with $x_i$ and end with $a_i$. The abelianization
for $\phi_2$ is now
\begin{equation*}
\begin{pmatrix}
4& 3& 2& 3& 1& 2 \cr
2& 3& 4& 0& 4& 2 \cr
3& 3& 3& 3& 1& 2 \cr
2& 2& 2& 1& 1& 1 \cr
2& 2& 2& 0& 2& 1 \cr
2& 2& 2& 2& 0& 1
\end{pmatrix},
\end{equation*}
and has eigenvalues $6+3\sqrt{5},6-3\sqrt{5},1,1,0,0$.
As before, the direct limit
of the transition graph has $3$ components, each of which is contractible,
so $\check H^1(\Omega_\phi, \Q)$ is $2$-dimensional.

\smallskip
\noindent\textbf{Example 3: $d=3$}
\smallskip

We now take $\A_1 = \{A,B,C\}$, with substitution
\begin{equation*}
\begin{array}{rcl}
\phi_1(A) & = & A B A B A A A B A (C B A)^6 (B A^3)^3 \cr
\phi_1(B) & = & B A A A B A A B A (C B A)^3 (C B A^3)^3 \cr
\phi_1(C) & = & (C B A)^3 (B A^3)^3.
\end{array}
\end{equation*}
with dilatation $\lambda =3 \theta^4$ where $\theta$
is the tribonacci dilatation, \ie the leading root of
$\theta^3 = \theta^2 + \theta + 1$. The
eigenvalues of the abelianization of $\phi_1$ are $\lambda$ and its
algebraic conjugates. It is not hard to see that $\dim(\check H^1(\Omega_\phi, \Q))=3$, and that
the coincidence rank of $\phi_1$ is $1$.

As far as $\phi_2$ and permutations of $\{1,2,3\}$ go,
$C$ is merely a spectator, with transitions
ending in $C$ having a trivial permutation and transitions beginning with
$c_i$ having the same permutations as transitions beginning with $a_i$ or
$b_i$. Specifically, the allowed transitions for $\Omega_{\phi_2}$
are
$\{a_1\hbox{ or } b_1\hbox{ or } c_1 \}\{a_3\hbox{ or } b_2\hbox { or } c_1\}$,
\\
$\{a_2\hbox{ or } b_2\hbox{ or } c_2 \}\{a_2\hbox{ or } b_1\hbox { or } c_2\}$,
and
$\{a_3\hbox{ or } b_3\hbox{ or } c_3 \}\{a_1\hbox{ or } b_3\hbox { or } c_3\}$.

The terms $(C B A)^3$, $(B A^3)^3$ and $(C B A^3)^3$
induce trivial permutations of $\{1,2,3\}$ and give
equal populations of $x_1,x_2,x_3$ to $\phi_2$ of any letter. They are special
cases of the ``padding'' discussed below in Example 4.

The nonzero eigenvalues of the abelianization of $\phi_2$ are $1,
1, \lambda$, and the algebraic conjugates of $\lambda$. Since the eventual
range of the transition graph of $\phi_2$ has $3$ contractible components,
$\dim(\check H^1(\Omega_{\phi_2})) = \dim(\check H^1(\Omega_{\phi_1}))=3$.

\smallskip
\noindent\textbf{Example 4:} Arbitrary $d$
\smallskip

The same tricks can be used to create examples of arbitrary degree
and $cr=3$.

Start with any primitive $d \times d$ Pisot matrix $M_0$ with odd determinant.
Since the determinant is $1 \pmod 2$, some power $M_0^k$ is equal
to the identity ($\bmod\, 2$), and we can assume that $k$ is big enough for
all but one of the eigenvalues of $M_0^k$ to be smaller than $1/3$.
Let $M_1 = 3 M_0^k$ be a new Pisot abelianization on $d$
letters A, B, ..., Z, which is the abelianization of a substitution of the
form
\begin{equation*}
\begin{array}{rcl}
\phi_1(A) & = & A B A B A A A B A \hbox{ plus padding}, \cr
\phi_1(B) & = & B A A A B A A B A \hbox{ plus padding}, \cr
\phi_1(\hbox{any other letter}) & = & \hbox{nothing but padding},
\end{array}
\end{equation*}
where ``padding'' means a product of words, each of the general form
$(V B^{\hbox{\tiny odd }} W A^{\hbox{\tiny odd }} Y)^3$, where $V$, $W$ and $Y$ are
arbitrary words in the letters other than $A$ and $B$. We can always
choose the padding such that $\phi_1$ of any letter begins with that
letter and ends in $A$.

For $\Omega_{\phi_2}$, the allowed transitions are
\{any $x_1$\}\{$a_3$ or $b_2$ or any other $y_1$\},
\{any $x_2$\}\{$a_2$ or $b_1$ or any other $y_2$\}, and
\{any $x_3$\}\{$a_1$ or $b_3$ or any other $y_3$\}.
With these choices, padding yields trivial permutations, and we can
pick $\phi_2(x_i)$ to begin with $x_i$ and end with $a_i$.

The abelianization $M_2$ for $\phi_2$
can be viewed as a collection of $3 \times
3$ blocks, one for each matrix element of the abelianization for
$\phi_1$. All blocks other than the $AA$, $AB$, $BA$ and $BB$ blocks are
multiples of $\left ( \begin{smallmatrix} 1&1&1 \cr 1&1&1 \cr 1&1&1
\end{smallmatrix} \right )$.
This is because ``padding'' yields equal numbers of $x_1$, $x_2$ and $x_3$.
The $AA$, $AB$, $BA$ and $BB$ blocks are the same as in our $d=1$ example,
plus multiples of $\left ( \begin{smallmatrix} 1&1&1 \cr 1&1&1 \cr 1&1&1
\end{smallmatrix} \right )$.

The populations of $\phi_2(a_1)$ and $\phi_2(a_3)$ add up to twice the
population of $\phi_2(a_2)$, and the populations of $\phi_2(b_1)$ and
$\phi_2(b_2)$ add up to twice the population of $\phi_2(b_3)$.  For $X
\ne A$ or $B$, the populations of $\phi_2(x_1)$, $\phi_2(x_2)$ and
$\phi_2(x_3)$ are all the same. This yields $2d-2$ vectors
in the kernel of
$M_2$, so the rank of $M_2$ is at most $d+2$. However, every
eigenvalue of $M_1$ is also an eigenvalue of $M_2$. Since the eventual
range of the transition graph of $\phi_2$ has three components, $1$ must
be an eigenvalue of $M_2$ with multiplicity 2. Together, these imply
that $\dim(\check H^1(\Omega_{\phi_2}))=d$.

\bigskip\noindent{\bf Acknowledgments.}
The work of L.S.\ is partially supported by
the National Science Foundation.
M.B.\ and H.B.\ thank the Mathematisches Forschungsinstitut Oberwolfach
for its hospitality during
the Research in Pairs Programme January 11-24, 2009, and in this context
we are also grateful to Sonja \v{S}timac.
H.B.\ thanks Delft University of Technology as well for its support
during the summer of 2009.


\bigskip

{\Small {\parindent=0pt Department of Mathematics, Montana State University,
Bozeman, MT 59717, USA \\
barge@math.montana.edu \\
\phantom{asdf} \\
Department of Mathematics,
University of Surrey, Guildford, Surrey GU2 7XH, UK \\
h.bruin@surrey.ac.uk\\
\phantom{asdf} \\
Department of Mathematics, University of Arizona, Tucson, AZ  85724 USA
\\ljones@math.arizona.edu\\ \phantom{asdf} \\
Department of Mathematics, University of Texas, Austin, TX 78712, USA \\
sadun@math.utexas.edu
}}


\begin{thebibliography}{99}
\bibitem[A]{Aus} J.\ Auslander, Minimal flows and their extensions,
  {\em North-Holland Mathematical Studies}, vol. 153, North-Holland,
  Amsterdam, New York, Oxford, and Tokyo, (1988).

\bibitem[AP]{AP} J.\ E.\ Anderson and I.\ F.\ Putnam, Topological invariants
  for substitution tilings and their associated $C^*$-algebras, {\em
    Ergodic Theory \& Dynamical Systems} \textbf{18} (1998), 509--537.

\bibitem[BBK]{BBK} V.\ Baker, M.\ Barge and J.\ Kwapisz, Geometric
  realization and coincidence for reducible non-unimodular Pisot
  tiling spaces with an application to $\beta$-shifts,
  \textit{J. Instit. Fourier.}  {\bf56} (7) (2006), 2213-2248.

\bibitem[BD1]{BDproper} M.\ Barge and  B.\ Diamond, A complete invariant for the topology of one-dimensional substitution tiling spaces, {\em Ergodic Theory \& Dynamical Systems} {\bf 21} (2001), 1333-1358.

\bibitem[BD2]{BD} M.\ Barge and  B.\ Diamond, Coincidence for substitutions of
Pisot type, {\em Bulletin de la Soci\'{e}t\'{e} Math\'{e}matique de France} {\bf
130} (2002), 619-626.

\bibitem[BD3]{BDcohomology} M.\ Barge and B.\ Diamond, Cohomology in
  one-dimensional substitution tiling spaces, {\em
    Proc. Amer. Math. Soc.} {\bf 136} (6) (2008), 2183-2191.

\bibitem[BK]{BK} M.\ Barge and J.\ Kwapisz, Geometric theory of
  unimodular Pisot substitutions, \textit{Amer J. Math.} {\bf128}
  (2006), 1219-1282.

\bibitem[BS]{BS} V.\ Berth\'{e} and A.\ Siegel, Tilings associated with
  beta-numeration and substitutions, {\em Integers: electronic journal
    of combinatorial number theory} {\bf 5} (2005), A02.

\bibitem[BSw]{BSw} M.\ Barge and R.\ Swanson, Rigidity in
  one-dimensional tiling spaces, \textit{Top.\ and its Appl.}
  {\bf157} (17) (2007), 3095-3099.

\bibitem[BT]{BT} E.\ Bombieri and J.\ E.\ Taylor, Which distributions of
  matter diffract? An initial investigation, {\em J.\ Physique} {\bf
    47} (7, suppl. Colloq. C3) C3-19-C3-28, International workshop on
  aperiodic crystals (les Houches) (1986).

\bibitem[CS]{CS} V.\ Canterini and A.\ Siegel, Geometric representation
  of substitutions of Pisot type, {\em Trans.\ Amer.\
    Math.\ Soc.} {\bf 353} (2001), 5121-5144.

\bibitem[De]{Dek} F.\ M.\ Dekking, The spectrum of dynamical systems
  arising from substitutions of constant length, {\em
    Z. Wahrscheinlichkeitstheorie verw. Gebiete} {\bf 41} (1978),
  221-239.

\bibitem[Dw]{Dworkin} S.\ Dworkin, Spectral theory and x-ray diffraction,
{\em J.\ Math.\ Phys.} {\bf 34} (1993), 2964--2967.

\bibitem[HS]{HS} M.\ Hollander and B.\ Solomyak, Two-symbol Pisot
  substitutions have pure discrete spectrum, {\em Ergodic Theory \&
    Dynamical Systems} {\bf 23} (2003), 533-540.

\bibitem[LMS]{LMS} J.-Y.\ Lee, R.\ V.\ Moody and B.\ Solomyak, Pure point
  dynamical and diffraction spectra, {\em Annales Henri Poincar\'{e}}
  {\bf3} (2002), 1003-1018.

\bibitem[M]{recog} B.\  Moss\'{e}, Puissances de mots et
  reconnaissabilit\'{e} des points fixes d'une substitution, {\em
    Theoretical Computer Science} {\bf 99} (1992), 327--334.

\bibitem[IR1]{ItoRao} S.\ Ito and H.\ Rao, Atomic surfaces, tilings and
  coincidences I. Irreducible case, {\em Israel J. Math.} {\bf 153}
  (2006), 129-156.

\bibitem[IR2]{ItoRao2} S.\ Ito and H.\ Rao, Atomic surfaces, tilings and
  coincidences II. Reducible case, {\em Ann. Inst. Fourier, Grenoble}
  {\bf56} (6) (2006), 2285-2313.

\bibitem[Kel]{Kellendonk} J.\ Kellendonk,
Pattern-equivariant functions and cohomology,
{\em J. Phys. A.} {\bf 36} (2003), 1--8.

\bibitem[KP]{KP} J.\ Kellendonk and I.\ Putnam, The Ruelle-Sullivan map for
$\R^n$-actions, {\em Math. Ann.} {\bf 334} (2006), 693--711.

\bibitem[Sa1]{Sadun} L.\ Sadun, Pattern-equivariant cohomology with
  integer coefficients. {\em Ergodic Theory \& Dynamical Systems} {\bf
      27} (2007), 1991--1998.
      
\bibitem[Sa2]{Sadun2} L.\ Sadun, Exact regularity and the cohomology of tiling spaces, arXiv:math/1004.2281 $ \langle$http://arxiv.org/abs/1004.2281v1$ \rangle$, 1--15.


\bibitem[So1]{solomyak1} B.\ Solomyak, Eigenfunctions for substitution tiling systems,\emph{ Advanced Studies in Pure Mathematics}, \textbf{49} (2007), 433--454.

\bibitem[So2]{solomyak} B.\ Solomyak, Nonperiodicity implies unique composition for self-similar translationally finite tilings, {\em Discrete and
Computational Geometry} {\bf 20} (2) (1998), 265--279.

\end{thebibliography}
\end{document}